\definecolor{dullmagenta}{rgb}{0.4,0,0.4}   
\definecolor{darkblue}{rgb}{0,0,0.4}
\newcommand{\x}{{\mathbf x}}
\newtheorem{thm}{Theorem}[section]
\newtheorem{prop}[thm]{Proposition}
\theoremstyle{remark}
\newtheorem{conj}[thm]{Conjecture}
\begin{document}
\title{Computational Methods For \\
Extremal Steklov Problems}

\author{Eldar Akhmetgaliyev}
\address{Department of  Mathematics, 
Simon Fraser University, 
8888 University Drive
Burnaby, B.C.
Canada}
\email{eakhmetg@sfu.edu}
\thanks{}

\author{Chiu-Yen Kao}
\address{Department of Mathematical Sciences, 
Claremont McKenna College, 
Claremont, CA 91711}
\email{Chiu-Yen.Kao@claremontmckenna.edu}
\thanks{Chiu-Yen Kao is partially supported by NSF DMS-1318364.}

\author{Braxton Osting}
\address{Department of Mathematics, 
University of Utah, 
Salt Lake City, UT 84112, USA}
\email{osting@math.utah.edu}
\thanks{}

\subjclass[2010]{}

\keywords{Steklov Eigenvalues, Isoperimetric Inequality, Extremal Eigenvalue Problems, Shape Optimization}

\date{\today}

\begin{abstract}
We develop a computational method for extremal Steklov eigenvalue problems and apply it to study the problem of maximizing the $p$-th Steklov eigenvalue as a function of the domain with a volume constraint. In contrast to the optimal domains for several other extremal Dirichlet- and Neumann-Laplacian eigenvalue problems, computational results suggest that the optimal domains for this problem are very structured. We reach the conjecture that the domain maximizing the $p$-th Steklov eigenvalue is unique (up to dilations and rigid transformations), has p-fold symmetry, and an axis of symmetry. The $p$-th Steklov eigenvalue has multiplicity 2 if $p$ is even and multiplicity 3 if $p\geq3$ is odd. 
\end{abstract}

\maketitle

\section{Introduction } \label{sec:intro}
Dedicated to the memory of Russian mathematician Vladimir Andreevich Steklov, a recent article in the Notices of the American Mathematical Society discuss his remarkable contributions to the development of science \cite{kuznetsov2014legacy}. One of his main contributions is on the study of the (second-order) Steklov eigenvalue problem, 
\begin{equation}
\label{eq: forward problem}
\left\{ \begin{array}{rcc}
\Delta u=0 & \mbox{in} & \Omega,\\
\partial_n u=\lambda u & \mbox{on} & \partial\Omega ,
\end{array}\right.
\end{equation}
named in his honor. Here, $\Omega\subset \mathbb R^d$ is a bounded  open set with Lipschitz boundary $\partial \Omega$, $\Delta$ is the Laplace operator, $\partial_n$ denotes the normal derivative, and $(\lambda,u)$ denotes the eigenpair. The Steklov spectrum is of fundamental interest as it coincides with the spectrum of the Dirichlet-to-Neumann operator $\Gamma\colon H^{\frac{1}{2}}(\partial\Omega)\to H^{-\frac{1}{2}}(\partial\Omega)$,  given by the formula $\Gamma u=\partial_{n}(\mathcal{H}u)$ where $\mathcal{H} u$ denotes the unique harmonic extension of $u\in H^{\frac{1}{2}}(\partial\Omega)$ to $\Omega$. It also arises in the study of sloshing liquids and heat flow. 

The Steklov spectrum is  discrete and we enumerate the eigenvalues in increasing order, 
$ 0=\lambda_0(\Omega)\le\lambda_1(\Omega)\le\lambda_2(\Omega)\le\ldots \to \infty $.
Weyl's law for Steklov eigenvalues, the asymptotic rate at which they tend to infinity,  is given by
$
\lambda_j \sim 2 \pi \left( \frac{j}{|\mathbb B^{d-1}| \ | \partial \Omega | }\right)^{\frac{1}{d-1}}
$ 
where $\mathbb B^{d-1}$ is the unit ball in $\mathbb R^{d-1}$ \cite{girouard2014spectral}. 
The eigenvalues also have a variational characterization, 
\begin{equation} \label{eq: lambda_k}
\lambda_{k}(\Omega)=
\min_{v\in H^{1}(\Omega)} \ 
\left\{ \frac{\int_{\Omega}\left|\nabla v \right|^{2}dx}{\int_{\partial\Omega}v^{2}ds}\colon \  
\int_{\partial\Omega}vu_{j}=0, \ 
j=0,\ldots,k-1\right\}.
\end{equation} 
where $u_j$ is the corresponding $j-$th eigenfunction. It follows from \eqref{eq: lambda_k} that Steklov eigenvalues satisfy the homothety property $\lambda_j(t \Omega) = t^{-1}\lambda_j(\Omega) $. We describe a number of previous results for extremal Steklov problems in Section \ref{sec:relWork}.

\subsection*{Statement of Results} In this short paper, we develop fast and robust computational methods for extremal Steklov eigenvalue problems.  
We apply these methods to the shape optimization problem 
\begin{equation} \label{eq:Ap}
\Lambda^{p\star} = \max_{\Omega \subset \mathbb R^2}  \  \Lambda_p(\Omega) 
\qquad \qquad \mathrm{where } \ \ \Lambda_p(\Omega) = \lambda_p(\Omega) \cdot  \sqrt{| \Omega| }. 
\end{equation}
Note that $\Lambda_p$ is invariant to dilations, so \eqref{eq:Ap} is equivalent to maximizing $\lambda_p(\Omega)$ subject to $|\Omega | = 1$. 
A computational study of this problem for values of $p$ between 1 and 101 suggests that the optimal domains are very structured and supports the following conjecture. 
\begin{conj} \label{optDom}
The maximizer, $\Omega^{p\star}$, of $\Lambda_p(\Omega)$ in \eqref{eq:Ap} is unique (up to dilations and rigid transformations), has $p$-fold symmetry, and an axis of symmetry. The $p$-th Steklov eigenvalue has multiplicity 2 if $p$ is even and multiplicity 3 if $p\geq 3$ is odd. 
\end{conj}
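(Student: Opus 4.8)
The statement is posed as a conjecture rather than a theorem, so realistically the plan is to establish the structural pieces that seem within reach and to isolate the genuinely open difficulties. I would organize an attempt around three ingredients: (i) existence and regularity of a maximizer, (ii) first-order optimality conditions from shape calculus, and (iii) a perturbation and symmetry analysis forcing the claimed multiplicity and symmetry.

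\emph{Existence.} Because $\Lambda_p$ is dilation invariant, I would fix $|\Omega|=1$ and take a maximizing sequence. The main danger is degeneration of this sequence (boundary oscillation, pinching into several components, or mass escaping to infinity), which is a genuine phenomenon for Steklov functionals, since even low-eigenvalue Steklov optima can fail to be attained. The first task is a priori control: bound the perimeter along the sequence by using Weyl's law together with the variational characterization \eqref{eq: lambda_k} to relate $\lambda_p$ to geometry, then pass to a limit in a relaxed class (finite-perimeter sets, or a $\gamma$-convergence framework for the Dirichlet-to-Neumann map) and argue the limit is admissible. Proving regularity of the free boundary of the optimizer is itself substantial.

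\emph{Optimality conditions.} For a smooth optimizer with $\lambda_p$ simple, the Hadamard shape derivative in a normal direction $V\cdot n$ has the schematic form $\lambda_p'(\Omega)[V] = \int_{\partial\Omega} q_p(u_p)\,(V\cdot n)\,ds$, where $q_p$ is a boundary-quadratic functional of the normalized eigenfunction $u_p$ built from $|\nabla_\tau u_p|^2$, $\lambda_p^2 u_p^2$, and the curvature. Differentiating the volume constraint gives $\int_{\partial\Omega}(V\cdot n)\,ds$, so stationarity of $\Lambda_p$ yields the overdetermined Euler--Lagrange condition $q_p(u_p)\equiv\text{const}$ on $\partial\Omega$, in addition to $\Delta u_p=0$ and $\partial_n u_p=\lambda_p u_p$. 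When $\lambda_p$ is multiple the functional is only one-sided differentiable and the derivative is a minimum over the eigenspace; stationarity must then be expressed through the associated Clarke subdifferential, giving nonnegative weights $c_i$ summing to one with $\sum_i c_i\, q_p(u_i)\equiv\text{const}$.

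\emph{Multiplicity and symmetry (the crux).} I would attack the multiplicity claim by contradiction: if $\lambda_p$ were simple at the optimum, then the single overdetermined condition $q_p(u_p)\equiv\text{const}$ is a Serrin-type problem that I expect forces $\Omega$ to be a disk; but on the disk the nonzero Steklov eigenvalues come in pairs $r^k\cos k\theta,\ r^k\sin k\theta$, so $\lambda_p$ is not simple there, a contradiction. Hence the optimal $\lambda_p$ must be multiple. To pin the exact multiplicity and the $p$-fold symmetry, I would run a second-order bifurcation analysis from the disk: perturb the boundary by $\rho(\theta)=1+\epsilon\sum_m a_m\cos(m\theta)$, compute the splitting of the degenerate cluster, and determine which symmetric perturbation maximizes $\lambda_p$. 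The resonance conditions $k\equiv\pm k'\pmod m$ governing which eigenfunction pairs couple should select $m=p$ and produce the even/odd dichotomy (the clean first-order splitting when $p$ is even leaving a pair, and the exceptional resonance when $p$ is odd leaving a triple). Uniqueness up to rigid motions I would not expect to obtain rigorously; at best one hopes for local uniqueness near the symmetric configuration. The hardest steps are existence and non-degeneration of the optimizer and, above all, converting the local disk analysis into a global statement that the \emph{true} maximizer has exactly this symmetry and multiplicity: the bifurcation computation only describes critical shapes near the circle, whereas (per the numerics) the optimal boundary departs substantially from a circle for large $p$. Closing that gap — ruling out competing symmetry classes and other local maxima — is precisely why the statement is a conjecture supported by computation rather than a proved theorem.
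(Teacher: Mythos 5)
You should note first that the statement is Conjecture~\ref{optDom}: the paper does not prove it, and no proof exists. Its only support in the paper is a numerical study, so the real comparison is between your analytic roadmap and the paper's computational-evidence route, which are genuinely different. The paper (i) builds a spectrally accurate boundary-integral solver for \eqref{eq: forward problem}, (ii) proves the shape-derivative formula of Proposition~\ref{prop:EigPerpFormula} (your schematic $q_p$ is the same object, since $|\nabla u|^2 = |\nabla_\tau u|^2 + \lambda^2 u^2$ on $\partial\Omega$), (iii) parameterizes boundaries by the Fourier expansion \eqref{eq:BndyExpan}, later restricting to the structured coefficients $a_{k\cdot p}$, $k=1,2,3$, and (iv) handles the nonsmoothness of multiple eigenvalues not through a Clarke subdifferential but through a minimax reformulation solved with \texttt{fminimax}; it then computes optimizers up to $p=101$ and reads off uniqueness, $p$-fold symmetry, the axis of symmetry, and the multiplicity-$2$/$3$ dichotomy as numerical observations (Section~\ref{sec:NumRes}). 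What the paper's route buys is access to the actual optimizers, which lie far from the disk at large $p$; what yours would buy, if completed, is a theorem. Your instinct that the optimizer must carry a multiple eigenvalue, and your one-sided-derivative stationarity condition, are consistent with what the paper observes (exact equality of $\Lambda_p$ and $\Lambda_{p+1}$, and also $\Lambda_{p+2}$ for odd $p$).

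That said, two steps of your program are weaker than you present them. The chain ``$\lambda_p$ simple at the optimum $\Rightarrow q_p(u_p)\equiv \mathrm{const} \Rightarrow \Omega$ is a disk'' invokes a Serrin-type rigidity theorem for an overdetermined Steklov system that is not known and is plausibly as hard as the conjecture itself; without it your contradiction argument for multiplicity does not close. And the existence step is not merely technical: as the paper recalls, Girouard and Polterovich showed that the maximum of the second Steklov eigenvalue over simply-connected domains is \emph{not} attained but is approached by domains degenerating to two disjoint disks, so any existence proof must rule out exactly this collapse for every $p$ --- the paper sidesteps the issue entirely by optimizing within a fixed finite-dimensional class of smooth star-shaped domains. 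You flag both difficulties yourself, and your closing observation that a bifurcation analysis near the disk cannot reach the true optimizers (whose normalized perimeter tends to roughly $4.53$, well above $2\sqrt{\pi}\approx 3.54$ for the disk) is exactly why the statement remains a conjecture supported by computation rather than a theorem.
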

Furthermore, as described in Section~\ref{sec:NumRes}, the associated eigenspaces are also very structured. 
This structure stands in stark contrast with previous computational studies for extremal eigenvalue problems involving the Dirichlet- and Neumann-Laplacian spectra \cite{oudet2004numerical,osting2010,antunes2012,antunes2013optimization,osting2013minimal,osting2014minimal,kao2014maximal}. In particular, denoting the Dirichlet- and Neumann-Laplacian eigenvalues of $\Omega \subset \mathbb R^2$ by $\lambda^D(\Omega)$ and $\lambda^N(\Omega)$ respectively, computational results suggest that the optimizers for the following shape optimization problems do not seem to have structure:
\begin{align*}
&\min_{\Omega \subset \mathbb R^2} \ \lambda^D_p(\Omega) \cdot |\Omega|, 
\qquad
\min_{\Omega \subset \mathbb R^2} \ \sum_{p=k}^{k+\ell}  c_p \cdot \lambda^D_p(\Omega) \cdot |\Omega| \ \mathrm{ with } \   c_p \geq 0 \  \mathrm{ and } \  \sum_{p=k}^{k+\ell}  c_p = 1, \\
& \max_{\Omega \subset \mathbb R^2} \ \frac{\lambda^D_p(\Omega)}{ \lambda^D_1(\Omega)},   
\qquad 
 \min_{\Omega \subset \mathbb R^2} \ \lambda^D_p(\Omega) +  |\partial \Omega|, 
\qquad
\max_{\Omega \subset \mathbb R^2} \ \lambda^N_p(\Omega) \cdot |\Omega|. 
\end{align*}
The only exception that we are aware of is when the optimal value is attained by a ball or a sequence of domains which degenerates into the disjoint union of balls. 

For the problems listed above, we also note that the largest value of $p$ for which these previous studies have been able to access is $p\approx 20$. Here, we compute the optimal domains for $p = 100$ and $p=101$; our ability to compute optimal domains for such large values of $p$ arises from (1) a very efficient and accurate Steklov eigenvalue solver and (2) a slight reformulation of the eigenvalue optimization problem that significantly reduces the number of eigenvalue evaluations required. 

\subsection*{Outline} 
In Section~\ref{sec:relWork}, we review some related work.
Computational methods are described in Section~\ref{sec:CompMeth}. 
Numerical experiments are presented in Section~\ref{sec:NumRes} and we conclude in Section~\ref{sec:disc} with a brief discussion. 

\section{Related Work} \label{sec:relWork}
Here we briefly survey some related work; a more comprehensive review can be found in  \cite{girouard2014spectral} and a  historical viewpoint with applications in \cite{kuznetsov2014legacy}. 

In 1954, R. Weinstock proved that the disk maximizes the first non-trivial  Steklov eigenvalue of
\begin{equation}
\label{eq: forward inhom problem}
\left\{ \begin{array}{rcc}
\Delta u=0 & \mbox{in} & \Omega,\\
\partial_n u=\lambda \rho u & \mbox{on} & \partial\Omega,
\end{array}\right.
\end{equation}
among \emph{simply-connected} planar domains with a fixed total mass $M(\Omega)=\int_{\partial \Omega} \rho(s)ds$ where $\rho$ is an $L^\infty(\partial \Omega)$ non-negative weight function on the boundary, referred to as the ``density'' \cite{weinstock1954inequalities,girouard2010shape}. It remains an open question for non-simply-connected bounded planar domains \cite{girouard2014spectral}. In 1974, J. Hersch, L. E. Payne, and M. M. Schiffer proved a general isoperimetric inequality for simply-connected planar domains which, in a special case, can be expressed
\begin{equation} \label{HPS} 
\sup \{ \lambda_n(\Omega)\cdot M(\Omega) \colon \Omega \subset \mathbb R^2 \}  \  \leq  \ 2 \pi n, \qquad n \in \mathbb N. 
\end{equation}
In \cite{girouard2010shape}, A. Girouard and I. Polterovich provided an alternative proof based on complex analysis to show that disk maximizes the first non-trivial  Steklov eigenvalue. Furthermore, they proved that the maximum of second eigenvalue is not attained in the class of simply-connected domain instead by a sequence of simply-connected domains degenerating to a disjoint union of two identical disks. In \cite{girouard2010}, A. Girouard and I. Polterovich proved that the bound in (\ref{HPS}) is sharp and attained by a sequence of simply-connected domains degenerating into a disjoint union of $n$ identical balls.   

An extension of R. Weinstock's result to arbitrary Riemannian surfaces $\Sigma$ with genus $\gamma$ and $k$ boundary components was given by A. Fraser and R. Schoen in \cite{fraser2011first}. The inequality
\begin{equation} \label{Riemannian estimate} 
\lambda_1(\Sigma) \cdot |\partial \Sigma | \le 2(\gamma + k) \pi
\end{equation}
derived therein reduces to R. Weinstock's result for $\gamma = 0$ and $k=1$ and the bound is sharp. However, for $\gamma = 0$ and $k=2$, the bound is not sharp. See \cite{fraser2011first} for a better upper bound on annulus surfaces.  In \cite{colbois2011}, it is proven that there exists a constant $C = C(d)$, such that for every bounded domain $\Omega \subset \mathbb R^{d}$, 
\begin{equation}
\label{eq:LinearGrowth}
\lambda_k (\Omega) \cdot | \partial \Omega |^{\frac{1}{d-1}} \leq C k^{\frac{2}{d}}, \qquad \qquad k \geq 1.  
\end{equation} 
A generalization for Riemannian manifolds is also given. 

Other objective functions depending on Steklov eigenvalues were also considered. 
In \cite{hersch1974}, J. Hersch, L. E. Payne, and M. M. Schiffer proved that the minimum of $ \sum_{k=1}^n \lambda_k^{-1}(\Omega)$ is attained when $\Omega$ is a disk for both perimeter and area constraints. This result is generalized to arbitrary dimensions in \cite{brock2001isoperimetric}.  
In \cite{dittmar2004sums}, it is proven that sums of squared reciprocal Steklov eigenvalues, $\sum_{k=1}^{\infty} \lambda_k^{-2}(\Omega)$, for simply-connected domains with a fixed perimeter is minimized by a disk. The sharp isoperimetric upper bounds were found for the sum of first $k$-th eigenvalues, partial sums of the spectral zeta function, and heat trace for starlike and simply-connected domains  using quasiconformal mappings to a disk \cite{Girouard2015}.

\medskip

There are also a few computational studies of extremal Steklov problems. The most relevant is recent work of B. Bogosel \cite{Bogosel2015}. This paper is primary concerned with the development of methods based on fundamental solutions to compute the Steklov, Wentzell, and Laplace-Beltrami eigenvalues. This method was used to demonstrate that the ball is the minimizer for a variety of shape optimization problems. The author also studies the problem of maximizing the first five Wentzell eigenvalues subject to a volume constraint, for which \eqref{eq:Ap} is a special case.  Shape optimization problems for Steklov eigenvalues with mixed boundary conditions have also been studied \cite{bonder2007optimization}. 

\section{Computational Methods} \label{sec:CompMeth}
\subsection{Computation of Steklov Eigenvalues}
We consider the Steklov eigenvalue problem~\eqref{eq: forward problem} where the domain $\Omega$ is simply-connected with smooth boundary $\partial \Omega$. Without loss of generality we assume that $\partial \Omega$  possesses a $2\pi$-periodic counterclockwise parametric representation of the form
\begin{equation*}\label{steklov_curve_parametrization}
x(t) = (x_1(t), x_2(t)), \quad 0 \leq t \leq 2\pi.
\end{equation*}
Use of integral equation methods for \eqref{eq: forward problem} leads directly
upon discretization to a matrix eigenvalue problem \cite{huang2004mechanical,cheng2012nystrom}. In order to avoid the inclusion of hypersingular operators we use eigenfunction representations based on a single layer potential.  The eigenfunction $u(x)$ is represented using a single layer potential, $\varphi$, with a slight modification to ensure uniqueness of the solution
\begin{equation}
\label{steklov_single_layer_modified}
u(x) = \int_{ \partial \Omega} \Phi (x-y)(\varphi(y) - \overline{\varphi}) ds(y) + \overline{\varphi},
\end{equation}
where $\displaystyle \Phi(x) = \frac{1}{2\pi}\log|x|$ and  $\overline{\varphi}  = \frac{1}{|\partial \Omega|}\int_{\partial \Omega} \varphi(y)  ds(y)$. 
See~\cite[Theorem 7.41]{kress1999} for the proof that the corresponding boundary operator is bijective. 
Taking into account well-known expressions (see e.g.~\cite{kress1999}) for the
jump of the single layer potential and its normal derivative across
$\partial \Omega$, the eigenvalue problem~\eqref{eq: forward problem} 
reduces to the integral eigenvalue equation for $(\lambda,\varphi)$, 
\begin{equation}
\label{steklov_integral_equation_system}
A[\varphi] = \lambda B[\varphi]. 
\end{equation}
Here, the boundary operators $A$ and $B$ are defined
\begin{align*}
A[\varphi](x) &:=  \int_{\partial \Omega} \frac{\partial\Phi(x-y)}{\partial n(x)}(\varphi(y) - \overline{\varphi}) ds(y) + \frac{1}{2}(\varphi(x) - \overline{\varphi})\\
B[\varphi](x) &:= \lambda\left(\int_{\partial \Omega} \Phi (x-y)(\varphi(y) - \overline{\varphi}) ds(y)+ \overline{\varphi} \right).
\end{align*}
In the cases considered in this paper, the Steklov eigenfunctions $u_k$ and the corresponding
densities $\varphi_k$ are smooth functions. These problems can thus be 
treated using highly effective spectrally-accurate
methods~\cite{COLTON:1998,kress1999}  based on explicit resolution of logarithmic
singularities and a Fourier series approximation of the density.  
To construct a spectral method for approximation of the integral operators in
\eqref{steklov_integral_equation_system}, we use a  Nystr\"om discretization of the explicitly parametrized boundary $\partial \Omega$.  
This spectral approximation of the integral equation system yields a
generalized matrix eigenvalue problem of the form
\begin{equation} 
{\tt A} {\tt X} = {\tt \Lambda} {\tt B} {\tt X}, 
\end{equation}
which can be solved numerically by means of the QZ-algorithm (see~\cite{golub2012matrix}). More details about this method can be found in \cite{akhmetgaliyev2016fast,AkhmetgaliyevNigamBruno}. 

\begin{figure}[t!]
\begin{center}
\includegraphics[width=.45\linewidth]{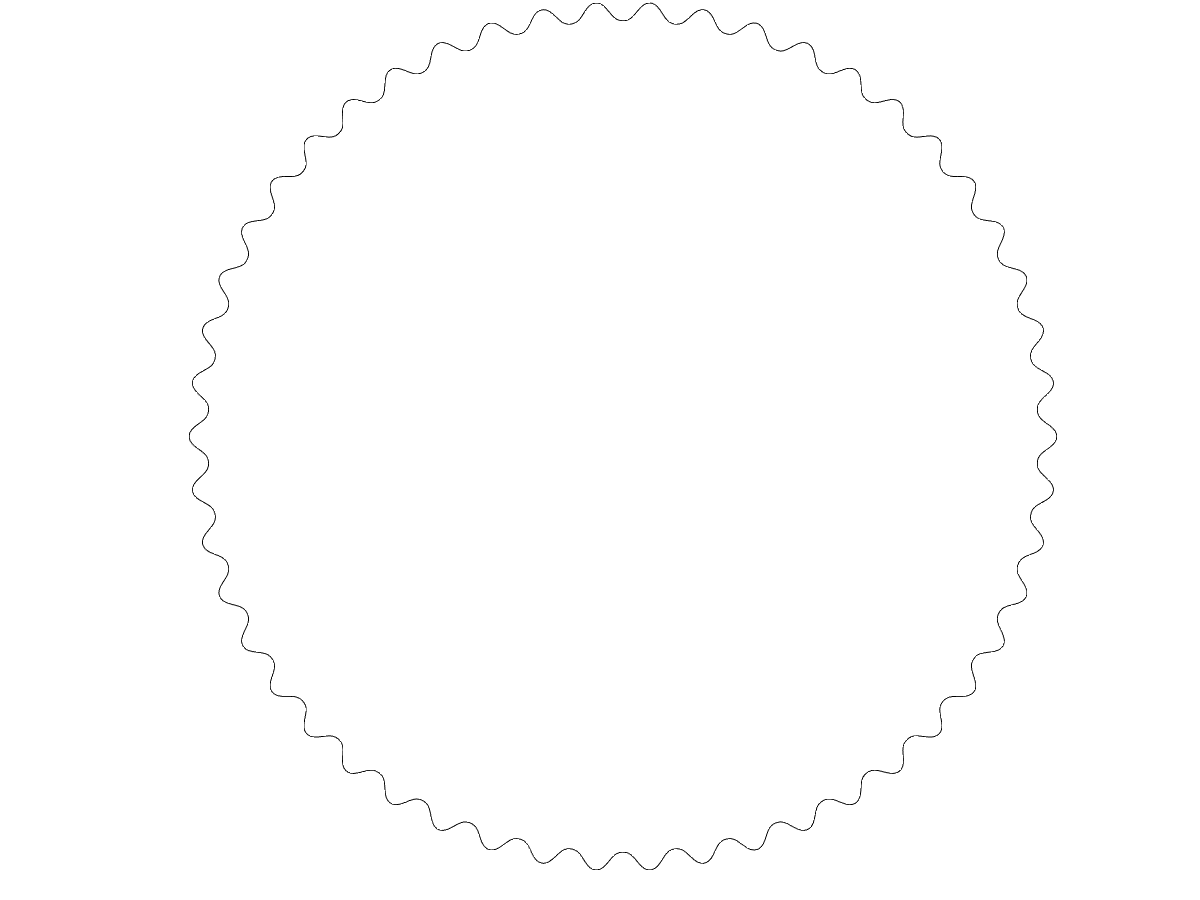} 
\includegraphics[width=.45\linewidth]{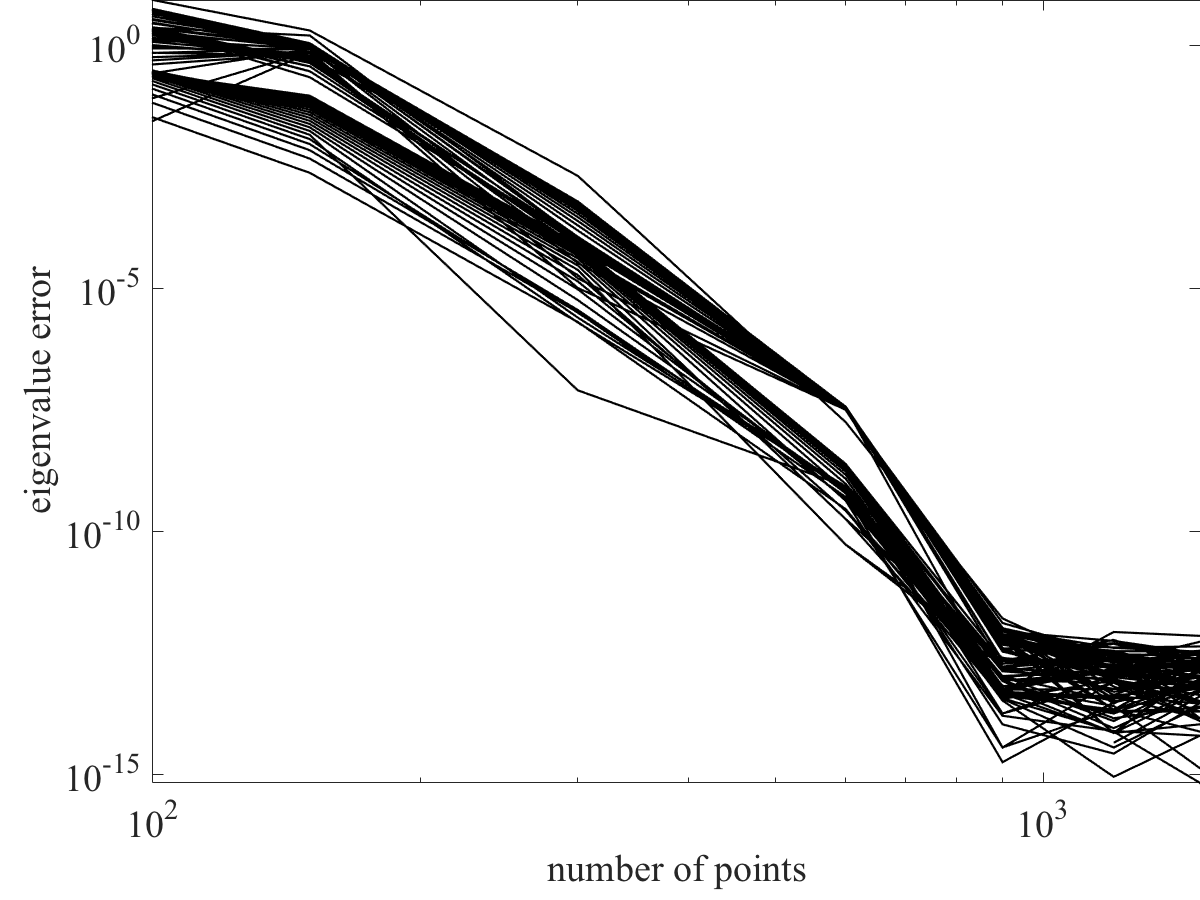}
\end{center}
\caption{ A log-log convergence plot of the first 100 eigenvalues for the domain on the left as the number of interpolation points increases. }
\label{fig:conv}
\end{figure}

In Figure~\ref{fig:conv}, we demonstrate the spectral  convergence of this boundary integral method.  In the left panel we depict a domain that was obtained as an optimizer for the $50$-th Steklov eigenvalue. In polar coordinates, this domain is given by  $\{(r,\theta)\colon r< R(\theta) \}$ where
{\footnotesize
$$ 
R(\theta) =  2.5 
+  0.057475351612645 \cdot \cos(50 \ \theta) 
+  0.002675998736772 \cdot \cos(100 \ \theta)
-   0.002569287572637 \cdot \cos(150 \ \theta).
$$}
In the right panel, we display a log-log convergence plot of the first $100$ Steklov eigenvalues of the domain in the left panel, as we increase the number $n$ of interpolation points. For ground-truth, we used $n = 1800$.

\subsection{Eigenvalue Perturbation Formula}The following proposition gives the Steklov eigenvalue perturbation formula, which can also be found in  \cite{dambrine2014extremal}. 

\begin{prop} \label{prop:EigPerpFormula} Consider the perturbation $x\mapsto x + \tau v$ and write $c = v\cdot \hat n$ where $\hat n$ is the outward unit normal vector. Then a simple (unit-normalized) Steklov eigenpair $(\lambda,u)$  satisfies the perturbation formula 
\begin{equation} \label{eq:EigPerpFormula}
\lambda' = \int_{\partial \Omega} \left(|\nabla u |^2 - 2 \lambda^2 u^2 -  \lambda \kappa u^2    \right)  c \ dx 
\end{equation}
\end{prop}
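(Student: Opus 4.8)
The plan is to realize $\lambda$ as a Rayleigh quotient along the deformed family of domains and differentiate, exploiting the fact that the eigenfunction is a critical point of that quotient so that the a priori troublesome derivative of the eigenfunction drops out. Concretely, I would set $\Omega_\tau = (I + \tau v)(\Omega)$ and let $(\lambda(\tau), u_\tau)$ be the branch of Steklov eigenpairs emanating from $(\lambda, u)$. Because $\lambda$ is simple, analytic perturbation theory guarantees that $\tau \mapsto (\lambda(\tau), u_\tau)$ is differentiable near $\tau = 0$; write $u'$ for the Eulerian (shape) derivative of the eigenfunction at $\tau = 0$. Since the Rayleigh quotient evaluated at any eigenfunction returns the corresponding eigenvalue, I would use the identity
\[
\lambda(\tau) = \frac{J_1(\tau)}{J_2(\tau)}, \qquad J_1(\tau) = \int_{\Omega_\tau} |\nabla u_\tau|^2\, dx, \quad J_2(\tau) = \int_{\partial\Omega_\tau} u_\tau^2\, ds,
\]
and, after normalizing the representative so that $J_2(0)=1$ and $J_1(0)=\lambda$, differentiate to obtain $\lambda' = J_1'(0) - \lambda\, J_2'(0)$.

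Next I would compute the two shape derivatives with the standard Hadamard formulas. For the volume term, differentiating over the moving domain gives $J_1'(0) = \int_\Omega 2\nabla u \cdot \nabla u'\, dx + \int_{\partial\Omega} |\nabla u|^2\, c\, ds$; integrating the first integral by parts and invoking $\Delta u = 0$ together with the boundary condition $\partial_n u = \lambda u$ collapses it to $2\lambda \int_{\partial\Omega} u\, u'\, ds$. For the boundary term, the Hadamard formula for surface integrals produces a curvature factor,
\[
J_2'(0) = \int_{\partial\Omega} 2u\, u'\, ds + \int_{\partial\Omega}\big(\partial_n(u^2) + \kappa u^2\big)\, c\, ds,
\]
and I would simplify using $\partial_n(u^2) = 2u\, \partial_n u = 2\lambda u^2$.

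Finally I would assemble $\lambda' = J_1'(0) - \lambda J_2'(0)$, at which point the two occurrences of the shape-derivative term $2\lambda\int_{\partial\Omega} u\, u'\, ds$ cancel exactly, leaving
\[
\lambda' = \int_{\partial\Omega}\big(|\nabla u|^2 - 2\lambda^2 u^2 - \lambda\kappa u^2\big)\, c\, ds,
\]
as claimed. (Equivalently, one may keep $u_\tau$ unit-normalized so that $J_2' = 0$; the resulting identity $2\int_{\partial\Omega} u u' = -\int_{\partial\Omega}(2\lambda u^2 + \kappa u^2)c$, substituted into $\lambda' = J_1'$, yields the same expression.) The cancellation of $u'$ is the structural reason the answer is local on $\partial\Omega$ and requires only $u$, never $u'$; in particular one never needs to solve the boundary value problem that $u'$ satisfies. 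The substantive points requiring care are exactly two: justifying that the eigenbranch and its shape derivative $u'$ exist and are regular enough ($H^1$ will do) to legitimize the integration by parts — this is precisely where simplicity of $\lambda$ enters — and applying the boundary Hadamard formula correctly so that the curvature term $\kappa u^2$ is tracked. I expect this curvature bookkeeping in the surface shape derivative to be the main technical obstacle.
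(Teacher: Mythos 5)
Your proposal is correct and is essentially the paper's proof: the paper likewise computes $\lambda' = \bigl(\int_\Omega |\nabla u|^2\,dx\bigr)'$ via the volume Hadamard formula, Green's identity with $\Delta u = 0$ and $\partial_n u = \lambda u$, and then eliminates the $\int_{\partial\Omega} u\,u'$ term by differentiating the boundary normalization $\int_{\partial\Omega} u^2\,ds = 1$ (with the curvature term), which is exactly the normalized variant you describe in your parenthetical remark. The only cosmetic difference is that you phrase the bookkeeping through the quotient rule $\lambda' = J_1'(0) - \lambda J_2'(0)$ rather than fixing the normalization so that $J_2' = 0$; the two are algebraically identical.
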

\begin{proof}
Let primes denote the shape derivative. 
From the identity $\lambda = \int_\Omega |\nabla u |^2 \ dx$, we compute
\begin{align*}
\lambda' &= 2\int_\Omega \nabla u \cdot \nabla u' \ dx + \int_{\partial \Omega}  |\nabla u |^2  c \ dx 
&& \textrm{(shape derivative)} \\
&= - 2 \int_{\Omega} (\Delta u) u'  \ dx + 2 \int_{\partial \Omega} u_n u'  \ dx + \int_{\partial \Omega}  |\nabla u |^2  c \ dx 
&& \textrm{(Green's identity)} \\
&= 2\lambda \int_{\partial \Omega} u u'  \ dx + \int_{\partial \Omega}  |\nabla u |^2  c \ dx
&& \textrm{(Equation \eqref{eq: forward problem})}. 
\end{align*}
Differentiating  the normalization equation, $ \int_{\partial \Omega} u^2  \ dx= 1$, we have that 
\begin{align*}
 \int_{\partial \Omega} u u' \ dx = -  \int_{\partial \Omega} \left(u u_n + \frac{ \kappa }{2} u^2\right) c \ dx 
 =  - \int_{\partial \Omega} \left( \lambda + \frac{ \kappa }{2} \right)u^2 \  c \ dx . 
 \end{align*}
 Putting these two equations together, we obtain \eqref{eq:EigPerpFormula}. 
 \end{proof}

\subsection{Shape Parameterization }

We consider domains of the form 
\begin{equation} \label{eq:BndyExpan}
\Omega = \{ (r,\theta)\colon\, 0 \leq r < \rho(\theta) \}, \qquad \text{where} \  \rho(\theta) = \sum_{k=0}^m a_k \cos(k\theta) +  \sum_{k=1}^m b_k \sin(k \theta). 
\end{equation}
The velocities  corresponding to a perturbation of the $k$-th cosine and sine coefficients are given by 
\begin{align*}
\frac{\partial \x(\theta)}{\partial a_k} \cdot \hat n(\theta) = \frac{\rho(\theta) \cos( k \theta)}{ \sqrt{\rho^2(\theta) + [\rho'(\theta)]^2}} 
\qquad \textrm{and} \qquad
\frac{\partial \x(\theta)}{\partial b_k} \cdot \hat n(\theta) = \frac{\rho(\theta) \sin( k \theta)}{ \sqrt{\rho^2(\theta) + [\rho'(\theta)]^2}} . 
\end{align*}
The derivative of  Steklov eigenvalues with respect to Fourier coefficients can be obtained using Proposition \ref{prop:EigPerpFormula}.

\subsection{Optimization Method}
We apply gradient-based optimization methods to minimize spectral functions of Steklov eigenvalues, such as  \eqref{eq:Ap}. The gradient of a simple eigenvalue is provided in Proposition~\ref{prop:EigPerpFormula}. While Steklov eigenvalues are not differentiable when they have multiplicity greater than one,  in practice, eigenvalues computed numerically that approximate the Steklov eigenvalues of a domain are always simple. This is due to discretization error and finite precision in the domain representation. Thus, we are faced with the problem of maximizing  a function that we know to be non-smooth, but whose gradient  is well-defined at points in which we sample. 

To compute solutions to the eigenvalue optimization problem \eqref{eq:Ap}, we (trivially)  reformulate the problem as a minimax problem, 
$$
\max_{\Omega \subset \mathbb R^2}  \  \min_{j\colon p \leq j\leq p-1+m } \Lambda_j(\Omega), \qquad \qquad m \geq 1. 
$$ 
This minimax problem can be numerically solved using Matlab's \verb+fminimax+ function which further reformulates the optimization problem 
\begin{align*} \label{eq:fminimax}
\max_{\Omega \subset \mathbb R^2}  & \  t \\
\mathrm{s.t.} & \ \Lambda_j(\Omega) \geq t, \qquad j = p, p+1, \ldots, p-1 +m. 
\end{align*}
and solves this problem using nonlinear constrained optimization methods. We choose $m$ to be the (expected) multiplicity of the eigenvalue at the optimal solution. For \eqref{eq:Ap}, we find this method to be more effective then using the BFGS quasi-Newton method directly, as reported in other computational studies of extremal eigevalues  \cite{osting2010,antunes2012,osting2013minimal,osting2014minimal,kao2014maximal}.


\section{Numerical Results} \label{sec:NumRes}
In this section, we apply the computational methods developed in Section \ref{sec:CompMeth} to the Steklov eigenvalue optimization problem  \eqref{eq:Ap}. 
The  methods are implemented in Matlab and numerical results are obtained on a 4-core 4 GHz Intel Core i7 computer with 32GB of RAM. 
 Unless specified otherwise, we initialize with randomly chosen Fourier coefficients and the number of interpolation points used is $6\cdot p\cdot m$ where $p$ is the eigenvalue considered and $m$ is largest free Fourier coefficient for the domain. 

\begin{figure}[t!]
\begin{center}
\vspace{-.5cm}
\includegraphics[width=.3\linewidth]{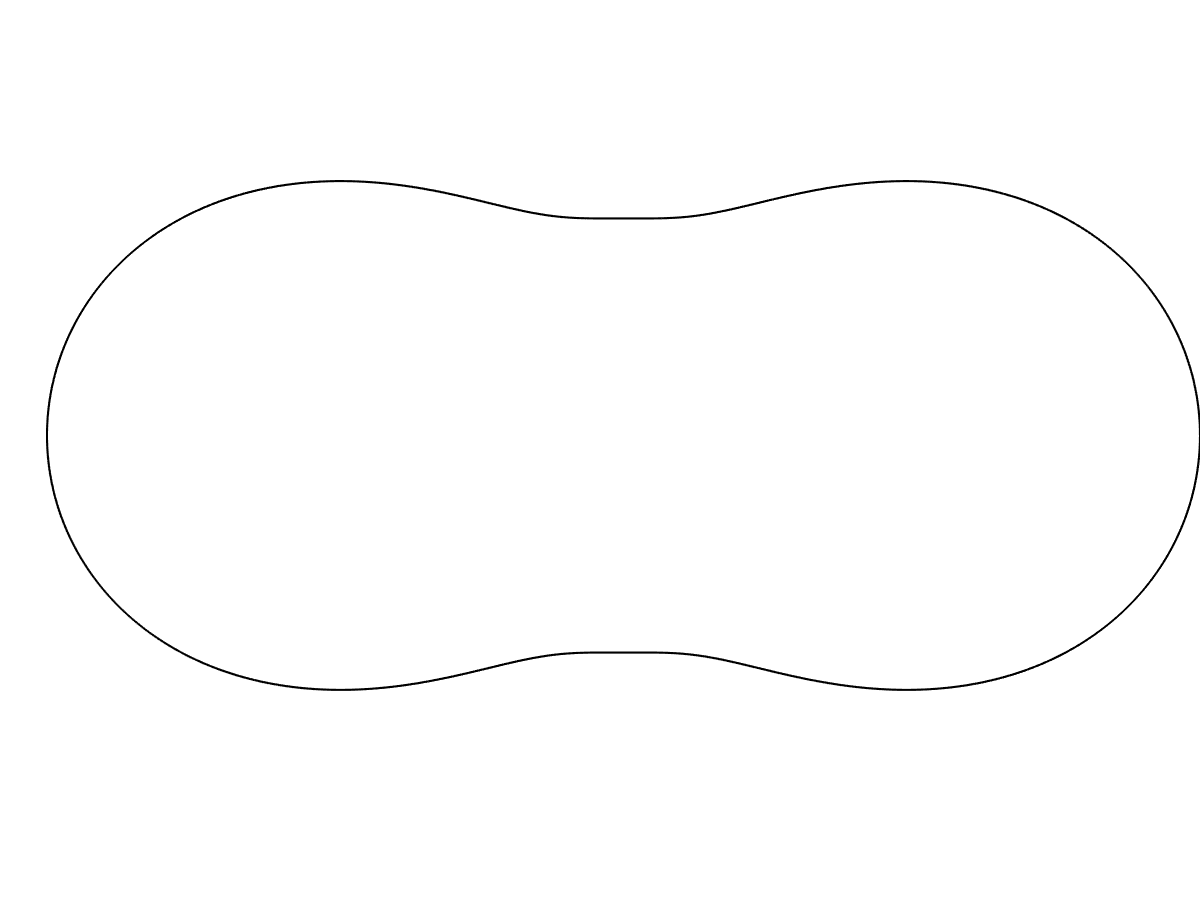}
\includegraphics[width=.3\linewidth]{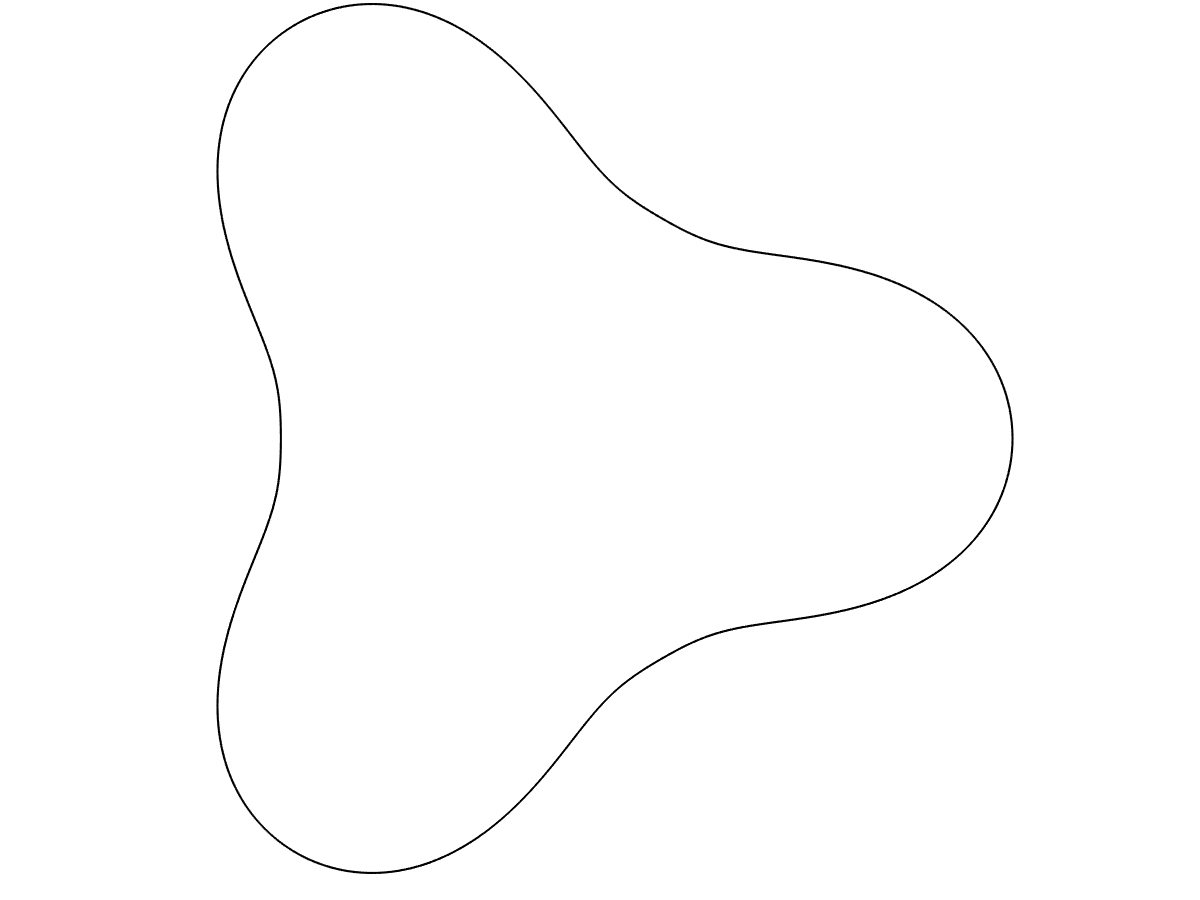}
\includegraphics[width=.3\linewidth]{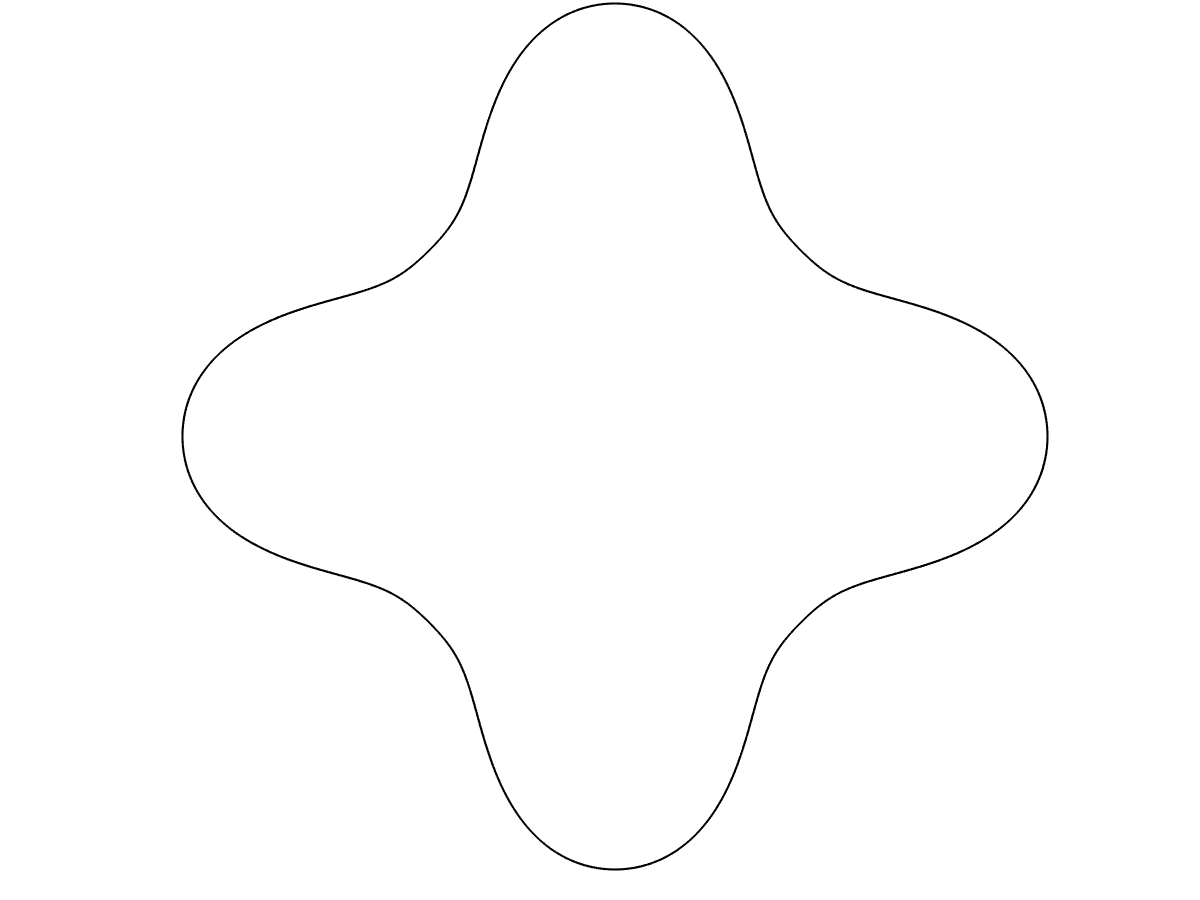}
\includegraphics[width=.3\linewidth]{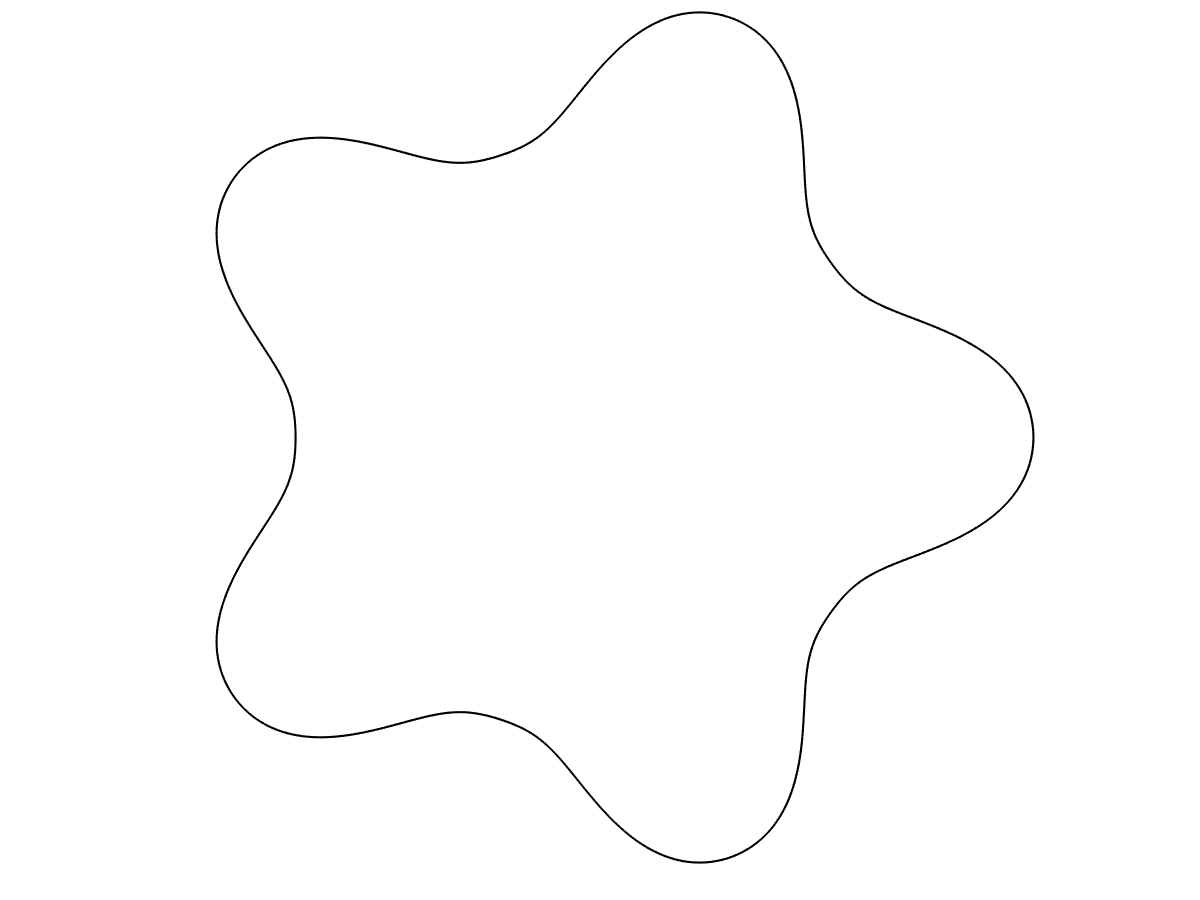}
\includegraphics[width=.3\linewidth]{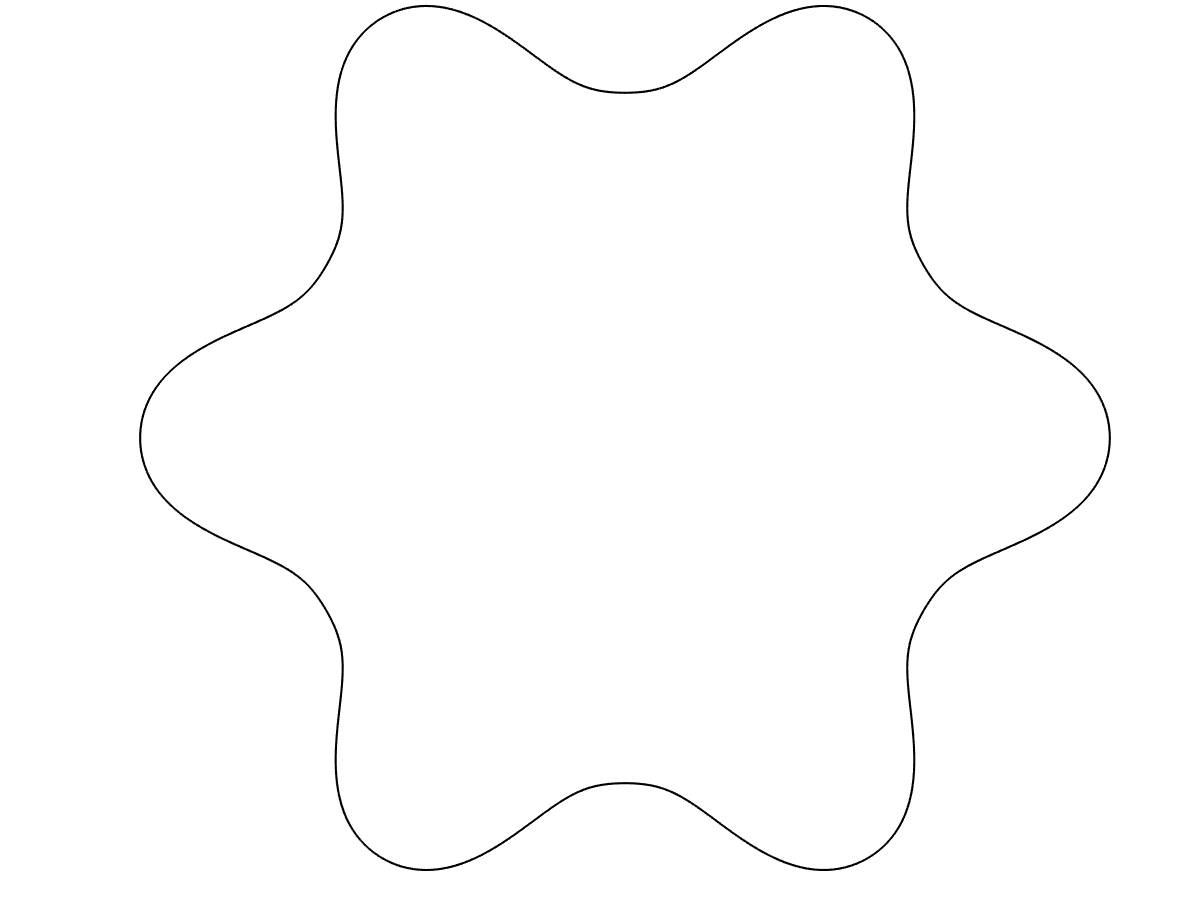}
\includegraphics[width=.3\linewidth]{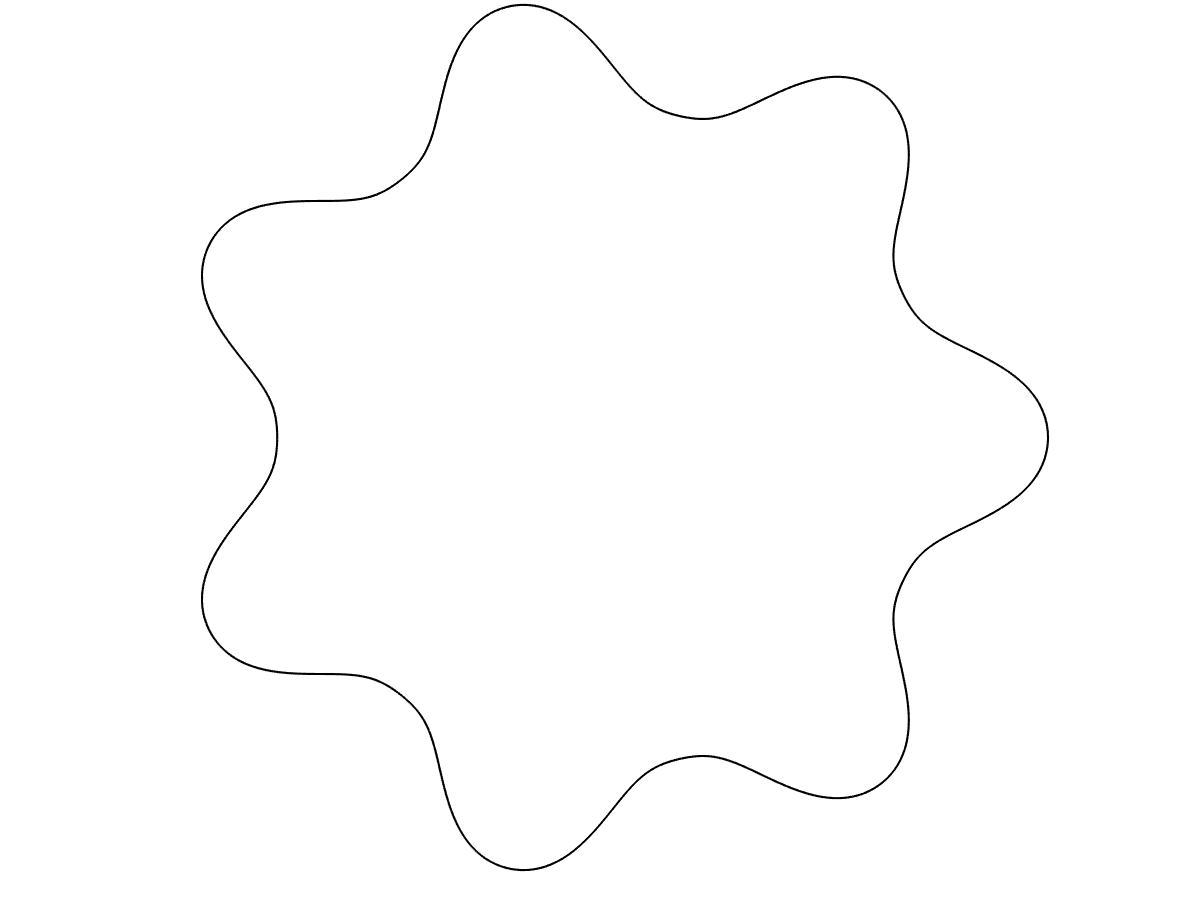}
\includegraphics[width=.3\linewidth]{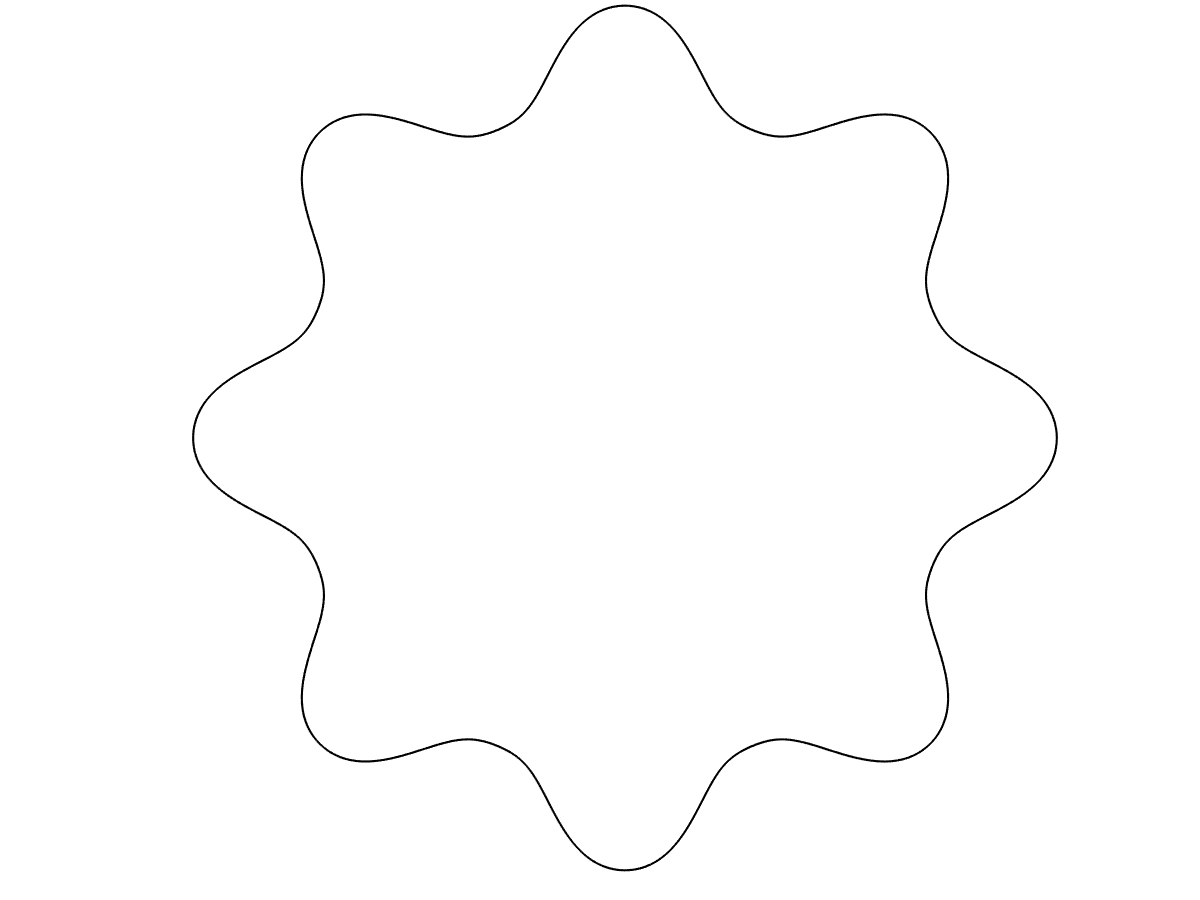}
\includegraphics[width=.3\linewidth]{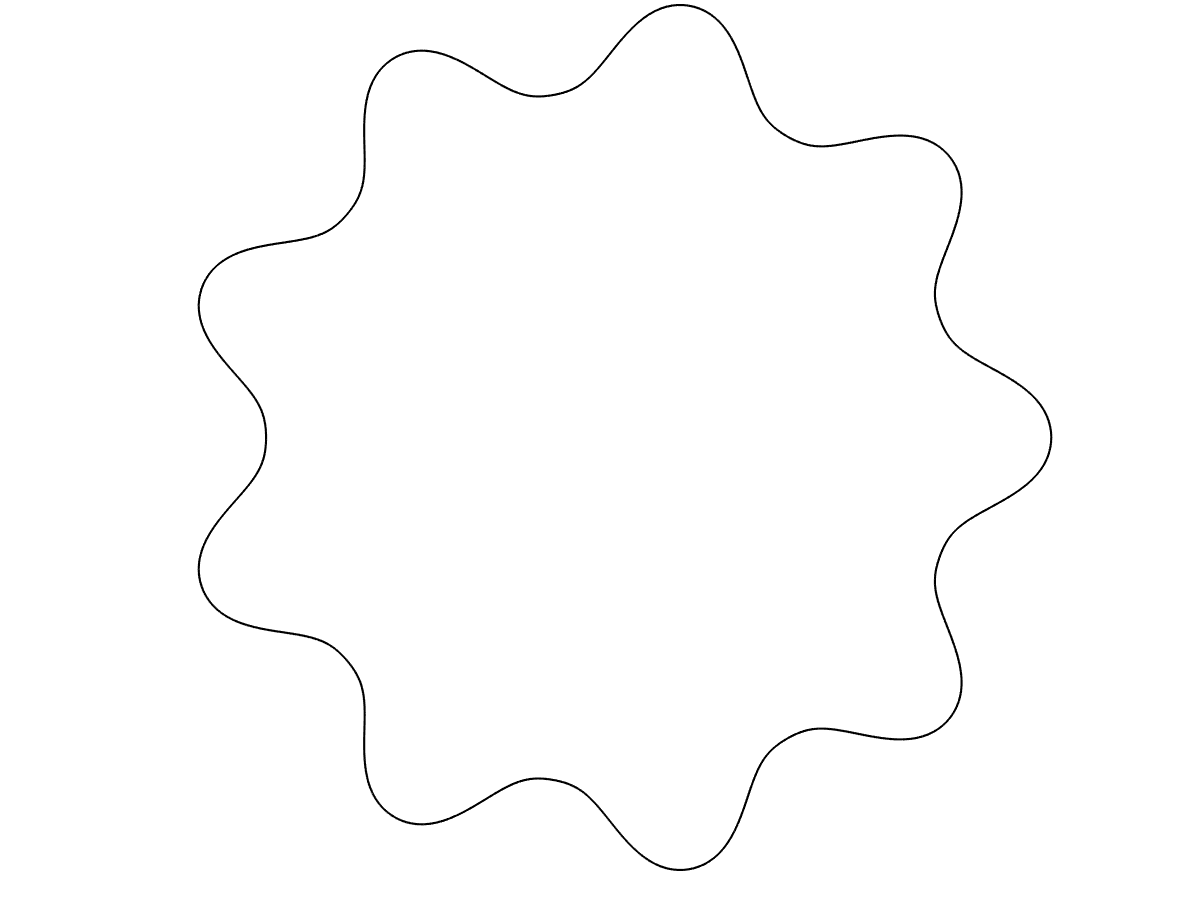}
\includegraphics[width=.3\linewidth]{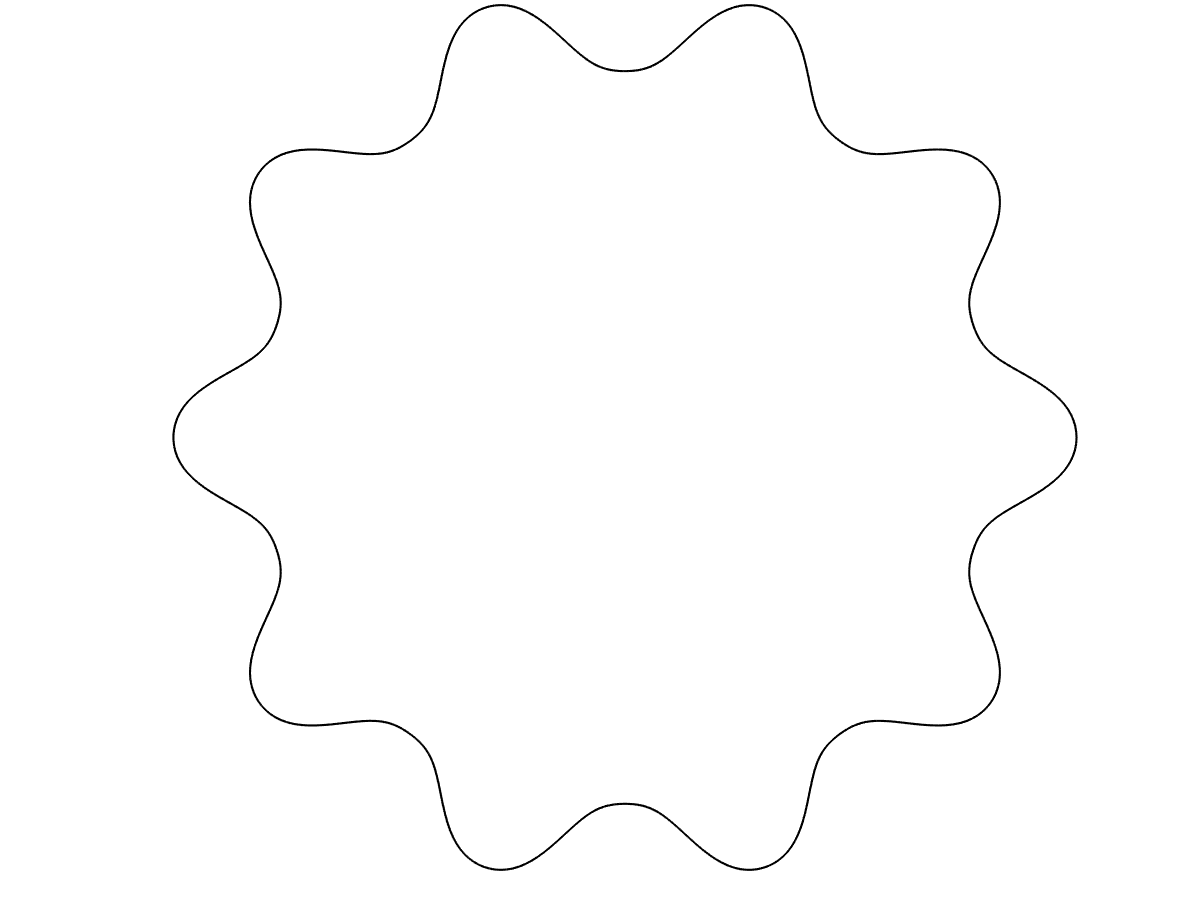}


{\scriptsize 
\begin{tabular}{r |r r r r r}
 j/p & 1   & 2   & 3   & 4   & 5   \\ 
 \hline 
1 & \bf 1.77245385087 & 0.77698864096 & 1.07942827817 & 1.17095593776 & 1.23886424463 \\ 
2 & \bf 1.77245385087 & \bf 2.91496429809 & 1.07942827817 & 1.17095593776 & 1.23886424463 \\ 
3 & 3.54489505800 & \bf 2.91496429809 & \bf 4.14395657280 & 1.61092851928 & 1.94478915686 \\ 
4 & 3.54492034560 & 3.28021642525 & \bf 4.14395657280 & \bf 5.28230087347 & 1.94478915686 \\ 
5 & 5.31736077095 & 4.45733853748 & \bf 4.14395657280 & \bf 5.28230087347 & \bf 6.49379637444 \\ 
6 & 5.31736233451 & 5.07531707453 & 4.91719025908 & 5.44583490769 & \bf 6.49379637444 \\ 
7 & 7.08981538184 & 6.12892820432 & 6.01189780569 & 5.44583490769 & \bf 6.49379637444 \\ 
8 & 7.08981542529 & 6.24787066243 & 6.01189780569 & 6.49483453114 & 6.72879743688 \\ 
9 & 8.86226925401 & 7.72388611906 & 7.63597621842 & 7.32884878844 & 6.72879743688 \\ 
10 & 8.86226925490 & 7.78555184785 & 7.63597621842 & 7.32884878844 & 8.13434132277 \\ 
11 & 10.63472310534 & 9.20742986631 & 8.93164119659 & 8.54954592721 & 8.80070046267 \\ 
12 & 10.63472310535 & 9.35623119578 & 9.13933506157 & 9.11731770841 & 8.80070046267 \\ 
 &  &  &  &  &  \\ 
  j/p & 6   & 7   & 8   & 9   & 10   \\ 
 \hline 
1 & 1.26563770224 & 1.29215311002 & 1.30399980096 & 1.31769945903 & 1.32419993715 \\ 
2 & 1.26563770224 & 1.29215311002 & 1.30399980096 & 1.31769945903 & 1.32419993715 \\ 
3 & 2.11876408010 & 2.25268514632 & 2.33026127056 & 2.39855173093 & 2.44116583782 \\ 
4 & 2.11876408010 & 2.25268514632 & 2.33026127056 & 2.39855173093 & 2.44116583782 \\ 
5 & 2.42888722971 & 2.78130732355 & 3.00533497492 & 3.18580541676 & 3.30698927275 \\ 
6 & \bf 7.64164323380 & 2.78130732355 & 3.00533497492 & 3.18580541676 & 3.30698927275 \\ 
7 & \bf 7.64164323381 & \bf 8.84377279901 & 3.24435955530 & 3.60805068461 & 3.86493960791 \\ 
8 & 7.76830589795 & \bf 8.84377279901 & \bf 9.99777577159 & 3.60805068461 & 3.86493960791 \\ 
9 & 7.76830589795 & \bf 8.84377279901 & \bf 9.99777577159 & \bf 11.19446201555 & 4.05917248381 \\ 
10 & 7.97457881941 & 9.04746859718 & 10.09825512148 & \bf 11.19446201555 & \bf 12.35253261747 \\ 
11 & 7.97457881941 & 9.04746859718 & 10.09825512148 & \bf 11.19446201555 & \bf 12.35253261747 \\ 
12 & 9.73477342826 & 9.22062775990 & 10.32313009868 & 11.36535997845 & 12.43514930841
\end{tabular}}
\caption{{\bf (top)} $\Omega^{p\star}$ for $p=2\ldots 10$. The optimal domain for $p=1$ is a ball. {\bf (bottom)}  Values $\Lambda_j(\Omega^{p\star})$ for $p=1,\ldots,10$ and $j=1,\ldots,12$. 
See \S\ref{sec:NumRes}.}
\label{fig:Ap}
\end{center}
\end{figure}

\begin{figure}[t!]
\begin{center}
\includegraphics[width=1\linewidth,trim={0.2cm 5.5cm 0.2cm 5.5cm},clip]{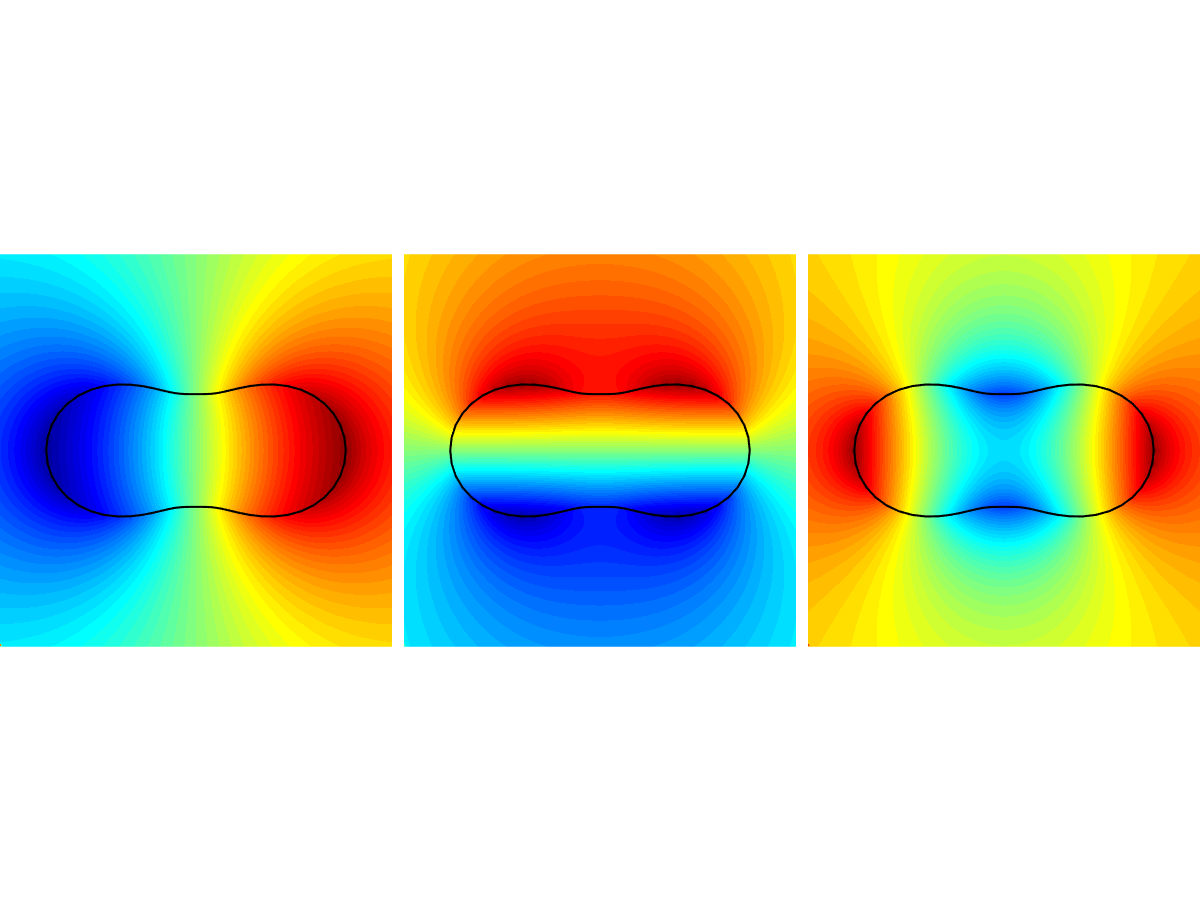} 
\vspace{.1cm}

\includegraphics[width=1\linewidth,trim={0.2cm 5.5cm 0.2cm 5.5cm},clip]{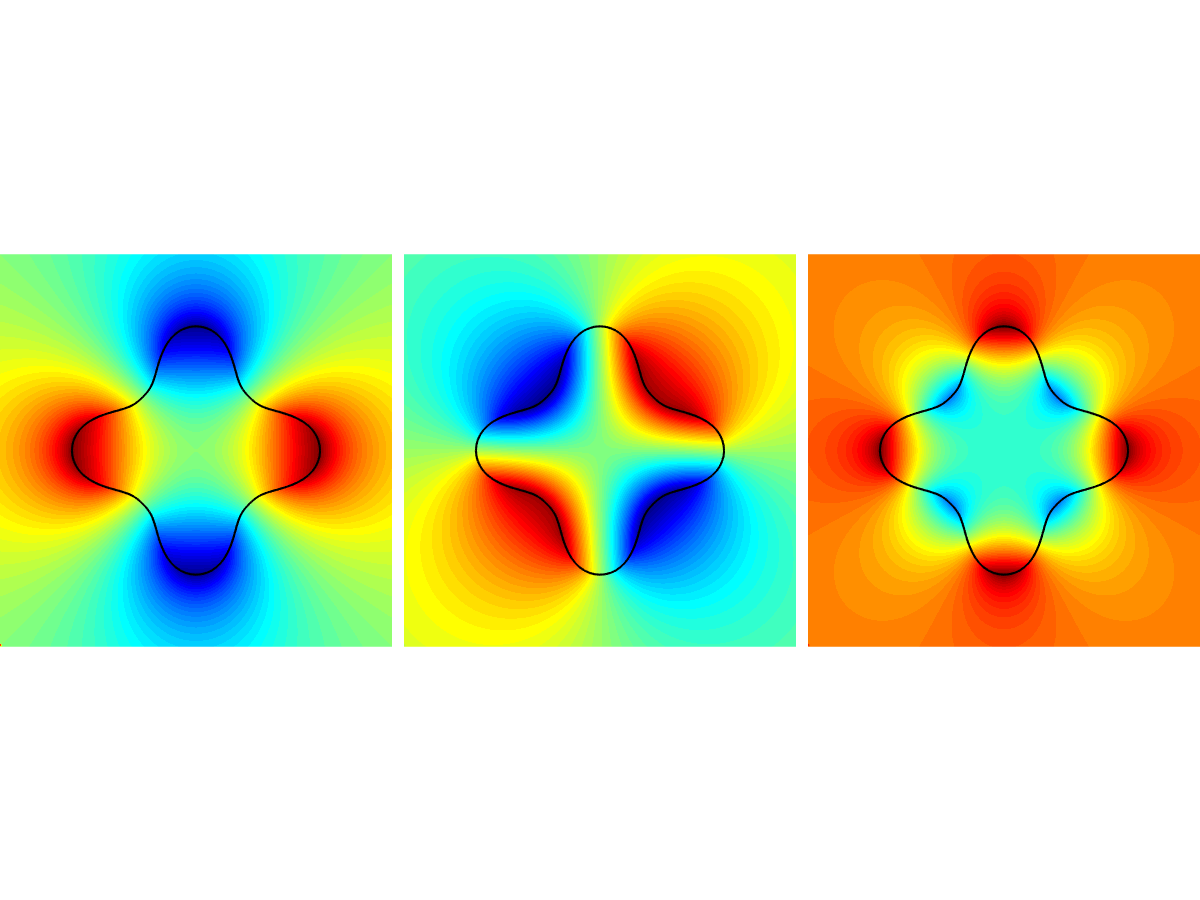} 
\vspace{.1cm}

\includegraphics[width=1\linewidth,trim={0.2cm 5.5cm 0.2cm 5.5cm},clip]{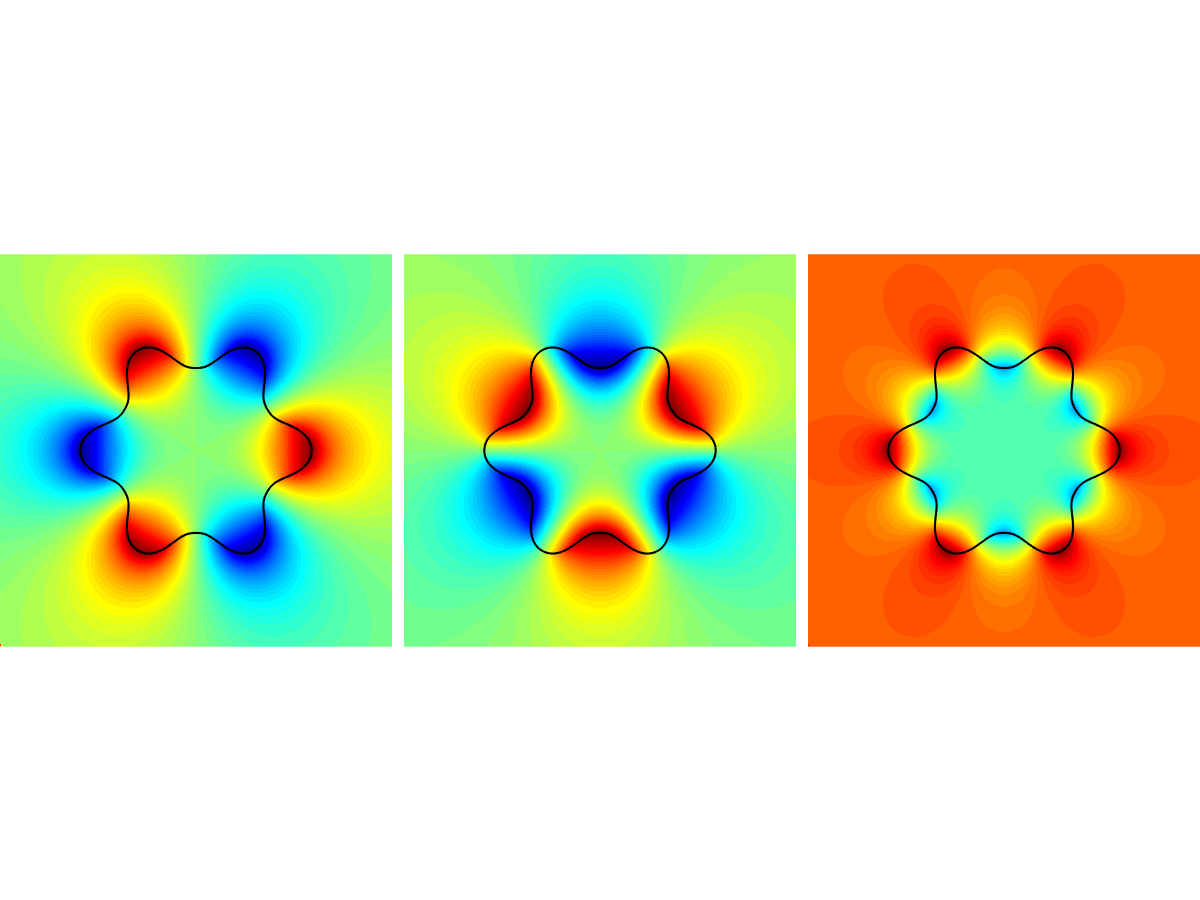} 
\vspace{.1cm}

\includegraphics[width=1\linewidth,trim={0.2cm 5.5cm 0.2cm 5.5cm},clip]{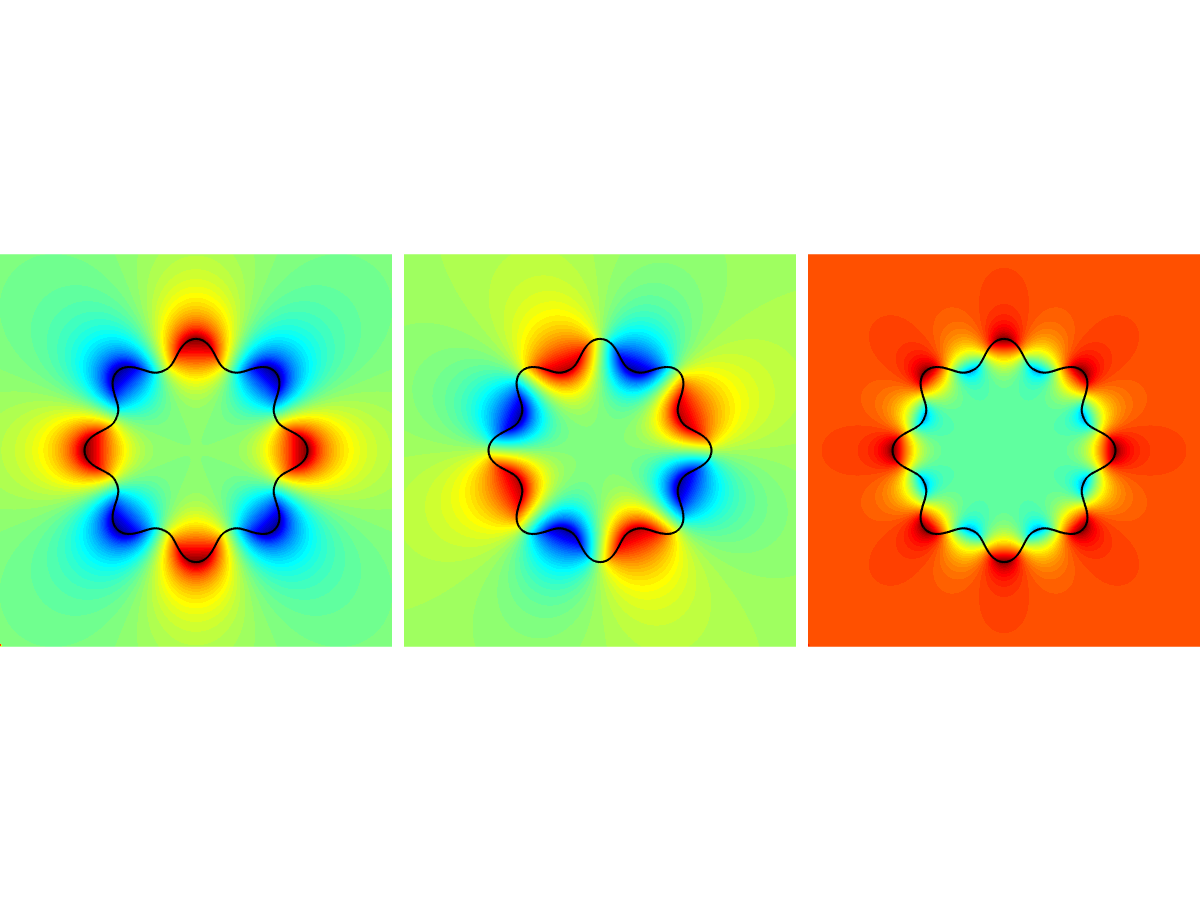} 
\vspace{.1cm}

\includegraphics[width=1\linewidth,trim={0.2cm 5.5cm 0.2cm 5.5cm},clip]{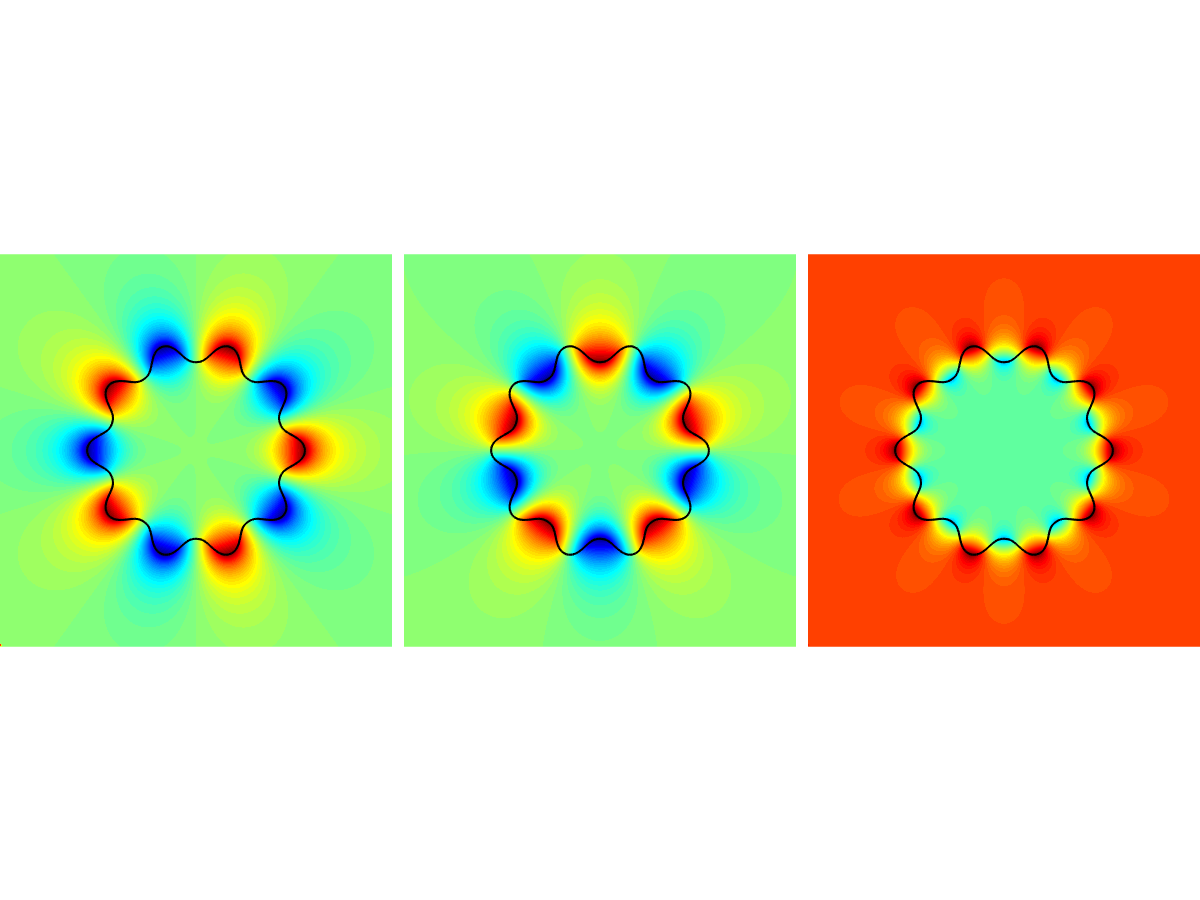}

\caption{For $\Omega^{p\star}$ with even $p=2, 4, 6, 8, 10$, Steklov eigenfunctions $p-1$, $p$, and $p+1$. 
Here, $\Lambda_{p-1} < \Lambda_p = \Lambda_{p+1} < \Lambda_{p+2}$. 
}
\label{fig:EigFunsEven}
\end{center}
\end{figure}

\begin{figure}[t!]
\begin{center}
\includegraphics[width=1\linewidth,trim={0.2cm 5.5cm 0.2cm 5.5cm},clip]{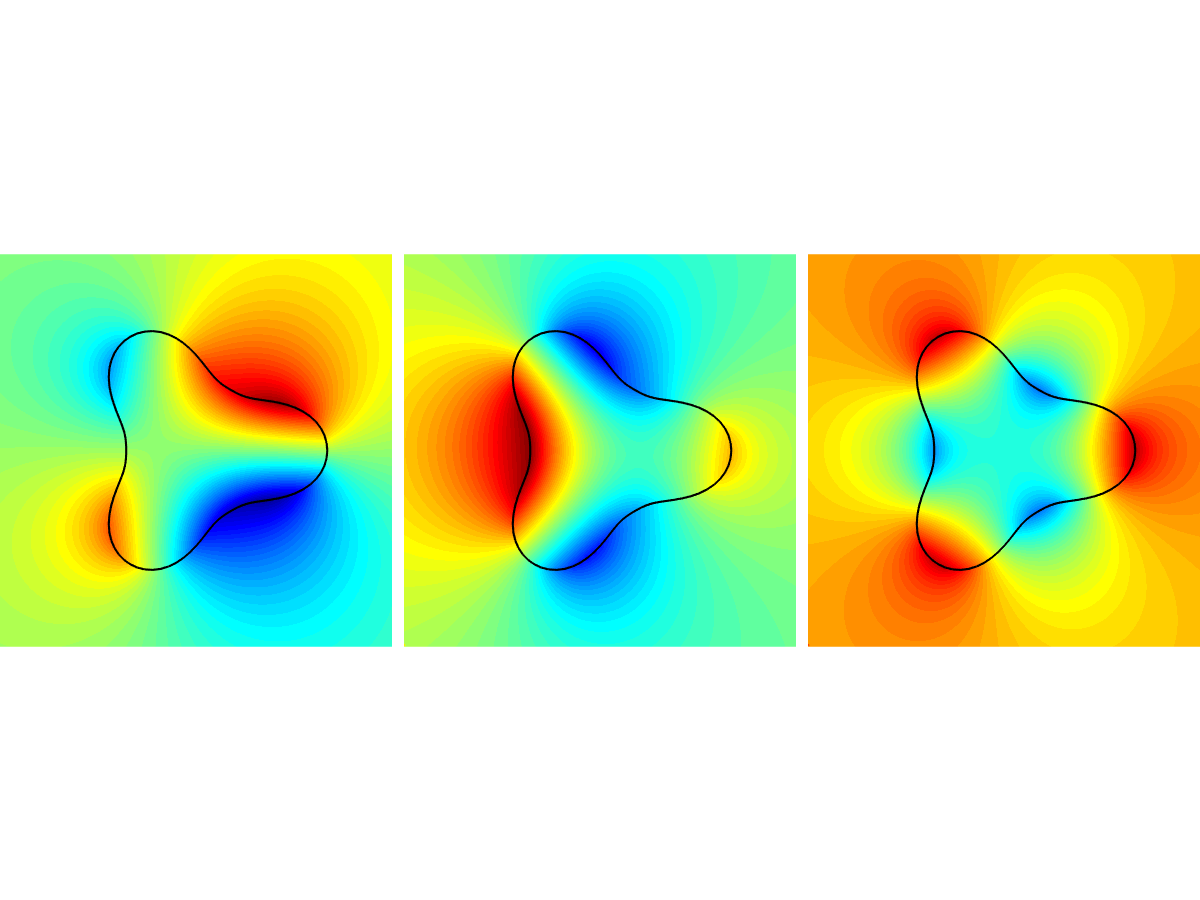} 
\vspace{.1cm}

\includegraphics[width=1\linewidth,trim={0.2cm 5.5cm 0.2cm 5.5cm},clip]{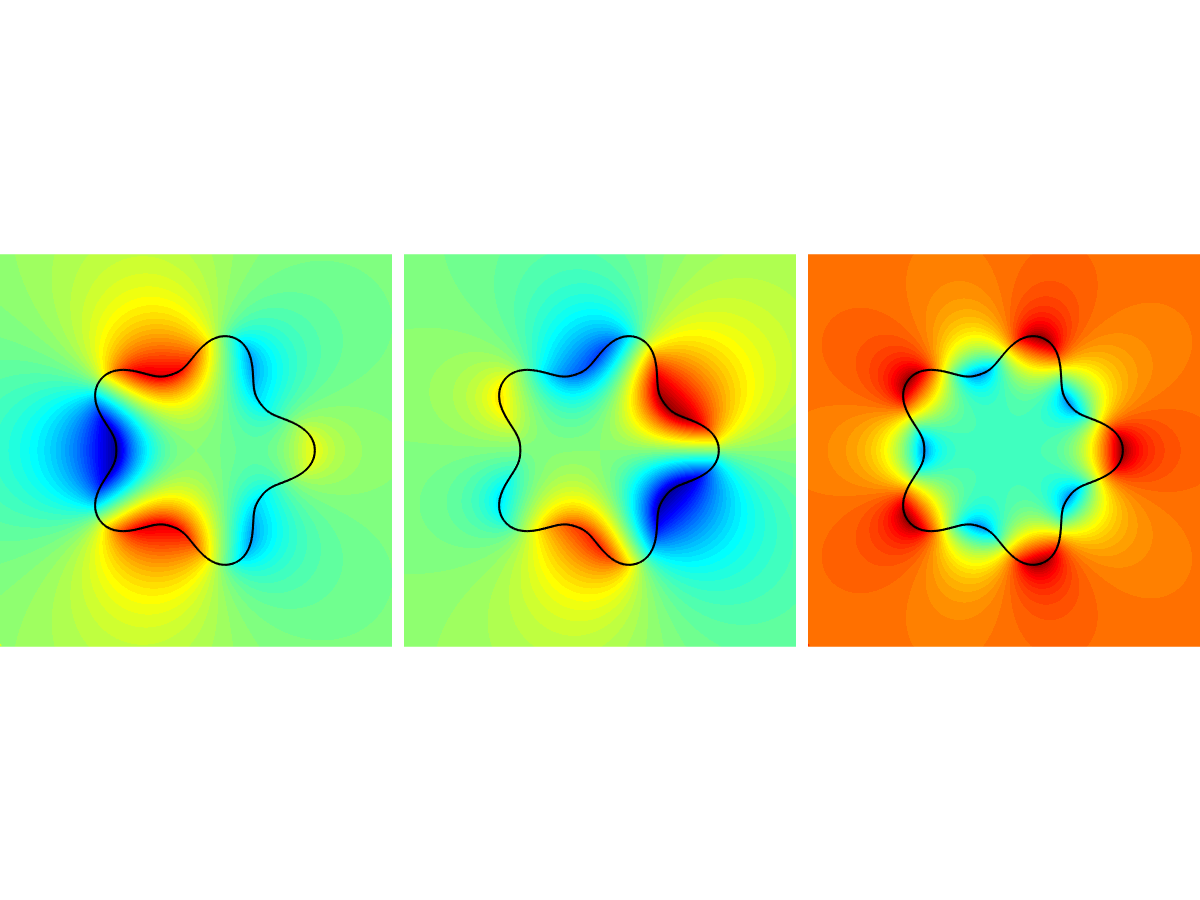} 
\vspace{.1cm}

\includegraphics[width=1\linewidth,trim={0.2cm 5.5cm 0.2cm 5.5cm},clip]{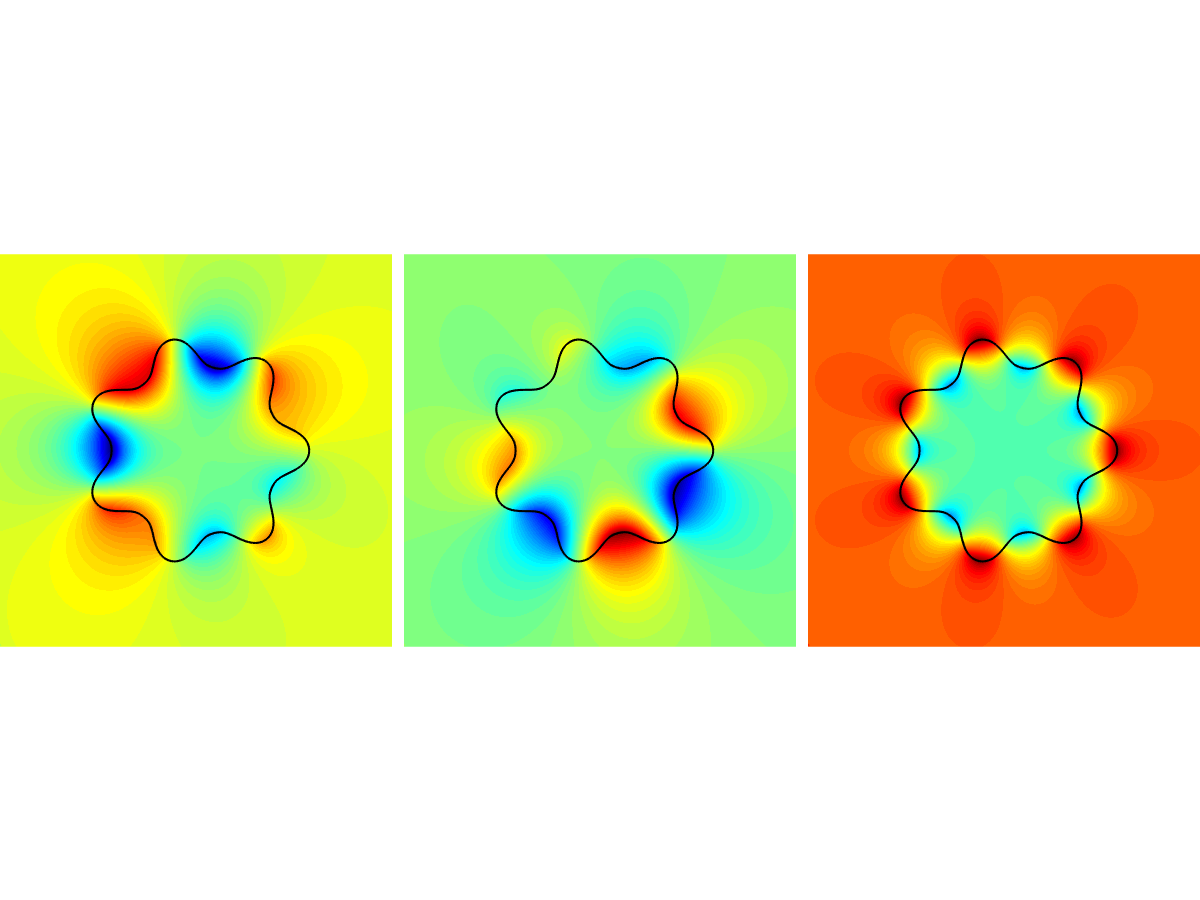} 
\vspace{.1cm}

\includegraphics[width=1\linewidth,trim={0.2cm 5.5cm 0.2cm 5.5cm},clip]{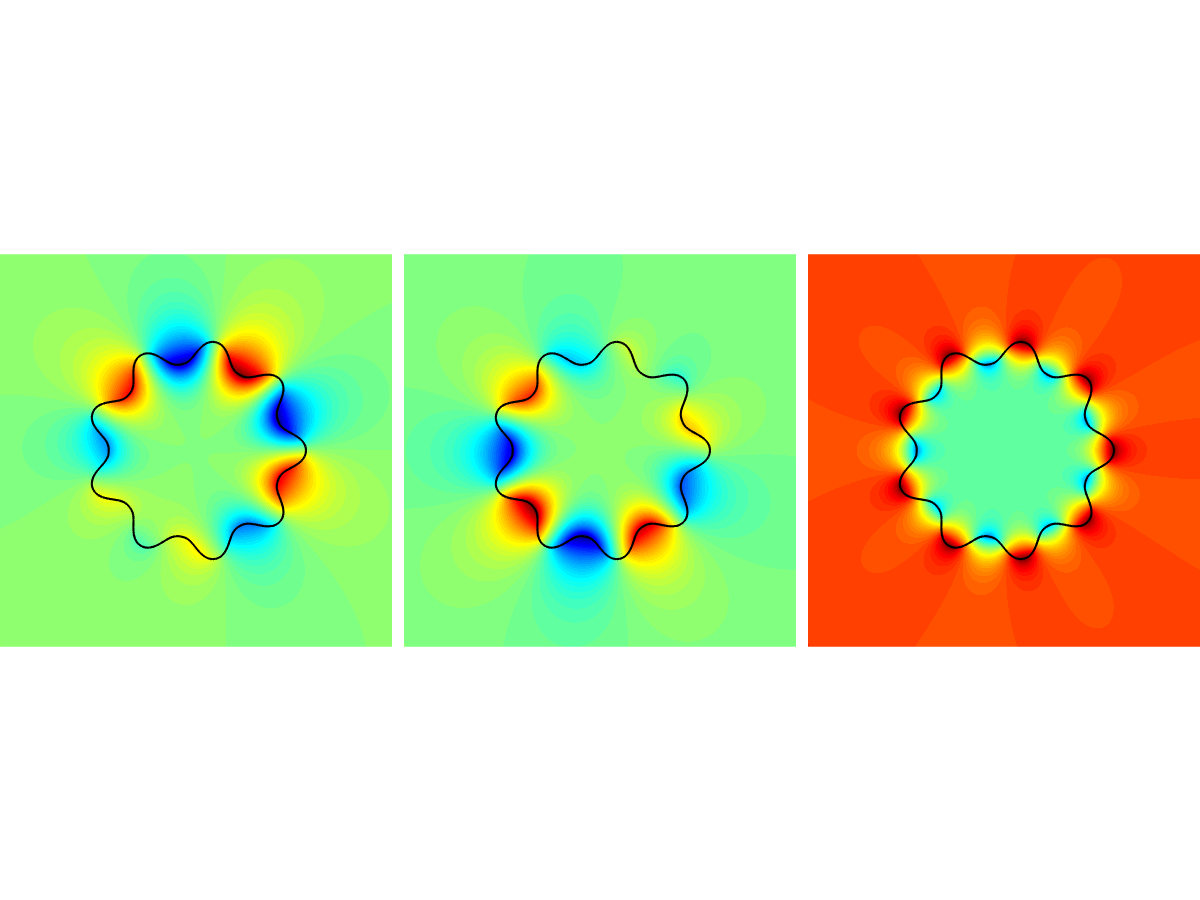} 

\caption{For $\Omega^{p\star}$ with odd $p=3, 5,7, 9$, Steklov eigenfunctions $p$, $p+1$, and  $p+2$. 
Here, $\Lambda_{p-1} < \Lambda_p = \Lambda_{p+1} = \Lambda_{p+2} < \Lambda_{p+3}$.  
}
\label{fig:EigFunsOdd}
\end{center}
\end{figure}

\subsubsection*{Initial Results.} Optimal domains, $\Omega^{p\star}$, for $\Lambda^{p\star}$ for $p=2\ldots 10$ are plotted in Figure~\ref{fig:Ap}. We also define $\Lambda_j^{p\star} := \Lambda_j( \Omega^{p\star})$ and tabulate $\Lambda_j^{p\star}$ for $j=1,\ldots 12$. 
In Figures \ref{fig:EigFunsEven} and \ref{fig:EigFunsOdd}, we plot 
the eigenfunctions corresponding to $\Lambda_j^{p\star} $ for $ j = p-1, \  p, \ p+1$ if $p$ is even and $j = p, \ p+1, \ p+2$ if $p\geq3$ is odd. 
The eigenfunctions are extended outside of $\Omega^{p\star}$ using the representation~\eqref{steklov_single_layer_modified}.
The optimal domains and their  eigenpairs are very structured. 
Namely, for these values of $p$, we make the following (numerical) observations:
\begin{enumerate}
\item The optimal domains, $\Omega^{p\star}$, are unique (up to dilations and rigid transformations).  
\item $\Omega^{p\star}$ looks like a ``ruffled pie dish''  with $p$ ``ruffles'' where the curvature of the boundary is positive. In particular, $\Omega^{p\star}$ has $p$-fold rotational symmetry and an axis of symmetry.
\item The $p$-th eigenvalue has multiplicity 2 for $p$ even and multiplicity 3 for $p\geq 3$ odd, {\it i.e.}, 
\begin{align*}
\text{$p$ even: } \quad & \Lambda_p^{p\star} = \Lambda_{p+1}^{p\star} <\Lambda_{p+2}^{p\star} \\
\text{$p$ odd: } \quad & \Lambda_p^{p\star} = \Lambda_{p+1}^{p\star} = \Lambda_{p+2}^{p\star} <  \Lambda_{p+3}^{p\star}.
\end{align*}
\item There is a very large gap between $\Lambda_{p-1}^{p\star}$ and $\Lambda_p^{p\star}$. For even $p$, $\Lambda_{p-1}^{p\star}$ is simple.
\item For even $p$, the  eigenfunction  corresponding to $\Lambda_{p-1}$ (left) and two eigenfunctions from the eigenspace corresponding to $\Lambda_p = \Lambda_{p+1}$ (center and right) are plotted in Figure  \ref{fig:EigFunsEven}.   The eigenfunctions are all nearly zero at the center of the domain and oscillatory on the boundary. The  eigenfunction  corresponding to $\Lambda_{p-1}$ takes alternating maxima and minima on the ``ruffles'' of the domain.  Eigenfunctions from the $\Lambda_p = \Lambda_{p+1}$  eigenspace may be chosen  so that one eigenfunction is nearly zero on the ``ruffles'' of the domain and takes alternating maxima and minima in-between. The other eigenfunction takes maxima on the ``ruffles'' of the domain and minima in-between.

For odd $p\geq3$, in Figure  \ref{fig:EigFunsOdd}, we plot the eigenfunctions from the three-dimensional eigenspace corresponding to $ \Lambda_p = \Lambda_{p+1} = \Lambda_{p+2} $. 
Again, eigenfunctions from this subspace are nearly zero on the interior of the domain and oscillatory on the boundary. They may be chosen so that, again, one eigenfunction takes maxima on the ``ruffles'' of the domain and minima in-between (right figures). The other two eigenfunctions are nearly zero on the  ``ruffles'' of the domain and take alternating maxima and minima in-between on the boundary. These two eigenfunctions are concentrated on opposite sides of the domain. 
\end{enumerate}

Some of these observations are also summarized in Conjecture \ref{optDom}. For $p=2,3,4,5$, the domain symmetries  can also been observed in the recent numerical results of B. Bogosel \cite{Bogosel2015}. 
Preliminary results indicate that the introduction of a hole in the domain decreases the $p$-th eigenvalue. To consider larger values of $p$, we use the structure of the optimal domains for relatively small $p$ to  reduce the search space and generate good initial domains for the optimization procedure. 

\subsubsection*{Structured Coefficients.}
If a domain has $p$-fold symmetry, the only non-zero coefficients in the Fourier expansion \eqref{eq:BndyExpan} are multiples of $p$.  If there is an axis of symmetry, then we can assume  $b_k = 0$ for $k\geq 1$. 
Therefore, when minimizing the $p$-th eigenvalue, we only vary the coefficients
$a_{k\cdot p}$ for $k=1,2,3$.
This simplification reduces the shape optimization problem to an optimization  problem with  just 3 parameters.

Let $\{a_j^{p\star}\}_{j}$ denote the coefficients corresponding to $\Omega^{p\star}$. 
Solving  \eqref{eq:Ap} for $p\leq 40$, we observe that, as a function of $p$, the coefficients $\{a^{p\star}_{k\cdot p}\}_{k=1,2,3}$ decay at a rate $a^{p\star}_{k\cdot p} \propto \frac{1}{p}$. Using computed values for the optimal coefficients, we obtain the following interpolations, denoted $\{ a^{p\circ}_j \}_j$,
\begin{equation} \label{eq:Interp}
a^{p\circ}_{1\cdot p} = \frac{1}{0.1815 + 0.3444 \cdot p} \quad 
a^{p\circ}_{2\cdot p} = \frac{1}{-6.1198 + 7.6443 \cdot p} \quad 
a^{p\circ}_{3\cdot p} = \frac{1}{-4.5563 -7.6561\cdot p}. 
\end{equation}
A plot of these three interpolations is given in Figure \ref{fig:OptCoeffs}(left). Let $\Omega^{p\circ}$ denote the domain corresponding to these coefficients, $\{ a^{p\circ}_j \}_j$. 
In  Figure \ref{fig:OptCoeffs}(center), we plot $\Lambda_p(\Omega^{p\circ})$ in blue, $\Lambda_p(\Omega^{p\star})$ in red, and the value of $\Lambda_p$ for a ball in black. The values for $\Omega^{p\circ}$ and $\Omega^{p\star}$ are indistinguishable, although the multiplicity of the $p$-th eigenvalue for these two domains differs. We observe that the value of $\Lambda_p(\Omega^{p \circ})$ grows linearly with $p$. Linear interpolation of $\Lambda_p(\Omega^{p\circ})$ gives  
\begin{equation} \label{eq:InterpOptVals}
\Lambda_p(\Omega^{p\circ}) \approx 0.5801 + 1.1765 \cdot p. 
\end{equation}
Linear interpolation of the eigenvalues of a ball gives
$$
\Lambda_p(\mathbb B) \approx 0.4436 + 0.8862 \cdot p. 
$$
The interpolation for a ball is in good agreement with Weyl's law, $\lambda_j(\Omega) |\partial \Omega| \sim j \pi$,  
since for a ball we have 
$| \partial \Omega | = 2 \sqrt{\pi} | \Omega |^{\frac{1}{2}} $ 
and 
$\Lambda_p(\mathbb B^2) = \lambda_p(\mathbb B^2) \sqrt{| \mathbb B^{2} |} = \frac{1}{2 \sqrt{\pi} } \lambda_p(\mathbb B^2) | \partial \mathbb B^{2} | \sim \frac{\sqrt{\pi}}{2} \cdot p $. One can view \eqref{eq:InterpOptVals} in terms of the bound given in \eqref{eq:LinearGrowth}. In dimension two, using the isoperimetric inequality, $4 \pi |\Omega| \leq  |\partial \Omega |^2$, we have that 
$$
\Lambda_p(\Omega) = \lambda_p(\Omega) \cdot |\Omega|^{\frac{1}{2}} \leq \frac{1}{2 \sqrt{\pi}}  \lambda_p(\Omega)\cdot |\partial \Omega| \leq  \tilde{C} p .
$$
We have constructed a sequence of domains with maximal value $\Lambda_p(\Omega)$, so have computed the (optimal) value of $\tilde C$ in this inequality.  For the  interpolated domains, $\Omega^{p\circ}$, we plot $p$ vs. the perimeter, $| \partial \Omega^{p\circ}| / \sqrt{ |  \Omega^{p\circ}| }$,   in Figure \ref{fig:OptCoeffs}(right). We observe that the perimeter appears to converge to a value near $4.53$, which is greater than the value for the disc, $ 2 \sqrt{\pi} \approx 3.54$.

\subsubsection*{Solution Of \eqref{eq:Ap} For Large $p$.} We extrapolate the interpolation given in  \eqref{eq:Interp} to $p=50, 51, 100, 101$. Using this as an initial condition for the optimization problem \eqref{eq:Ap} where we restrict the admissible set to domains with coefficients $ a_{k\cdot p}$ for $k=1,2,3$, we solve the optimization problem to obtain domains, $\Omega^{p\star}$, plotted in Figure \ref{fig:highP}.  Here, we also tabulate $\Lambda_j^{p\star}$ for $j=p-2,\ldots p+4$. In Figures \ref{fig:eigFunp50100} and  \ref{fig:eigFunp51101}, we plot the eigenfunctions corresponding to $\Lambda_j^{p\star} $ for $ j = p-1, \  p, \ p+1$ if $p$ is even and $j = p, \ p+1, \ p+2$ if $p$ is odd. The observations made above for small values of $p$ hold here as well.

\begin{figure}[t!]
\begin{center}
\includegraphics[width=.32\linewidth]{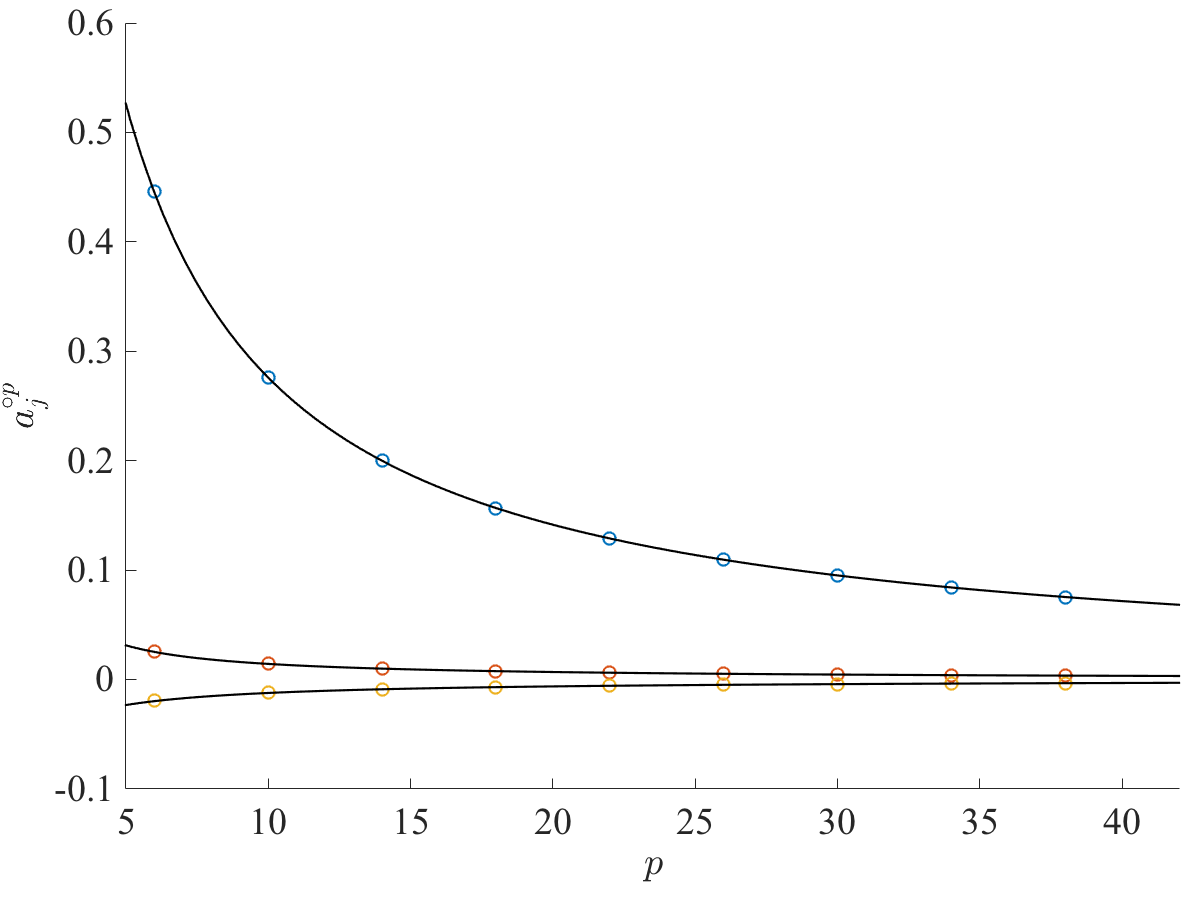}
\includegraphics[width=.32\linewidth]{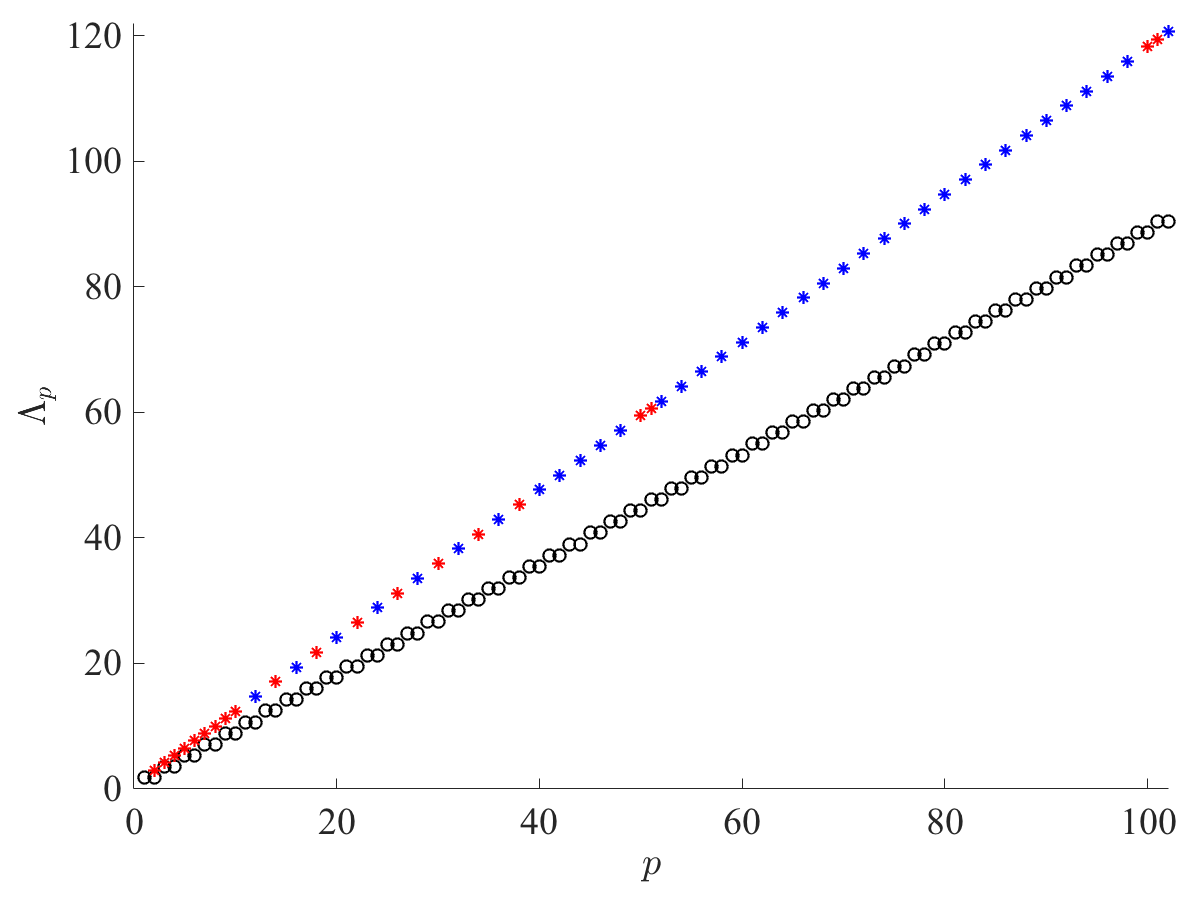}
\includegraphics[width=.32\linewidth]{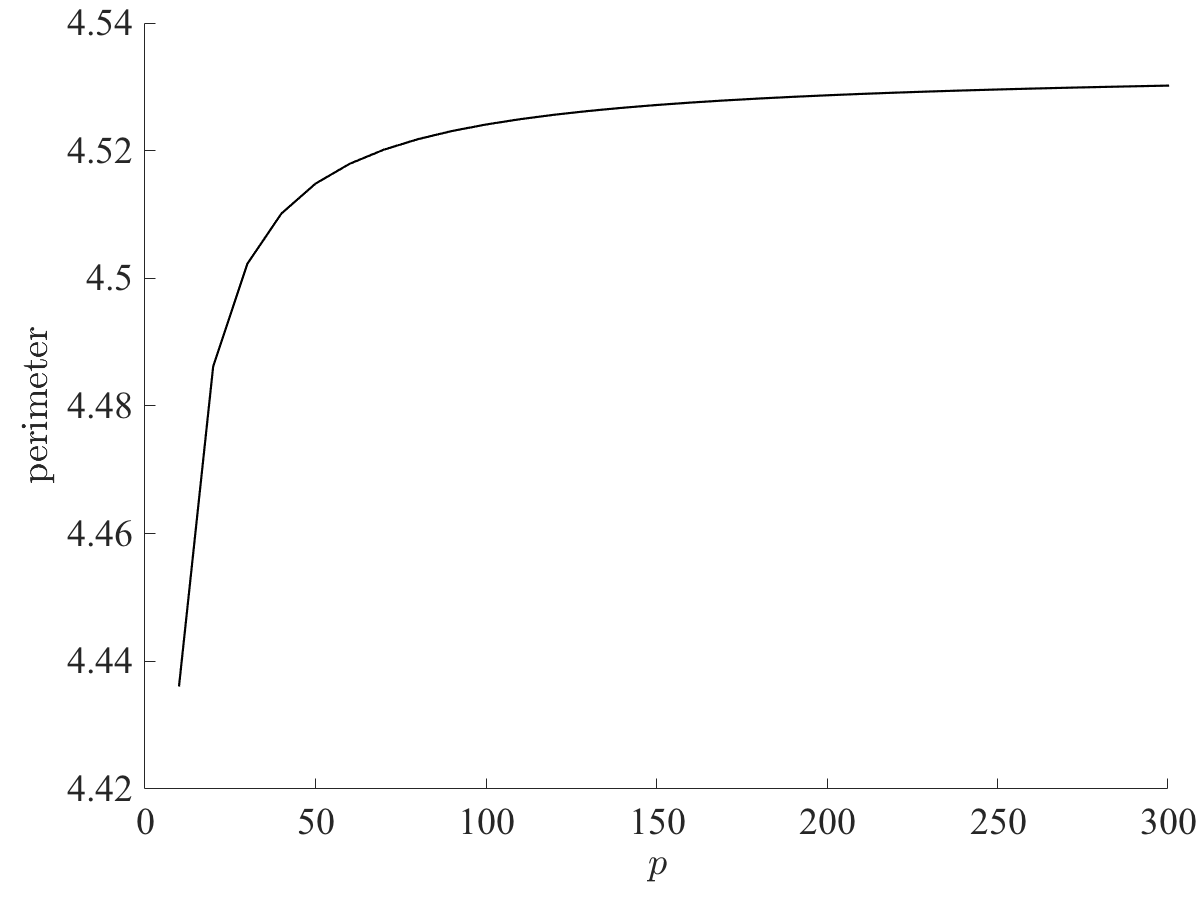}
\caption{{\bf (left)} For $p=6,10,14,\ldots 38$, a plot of the coefficients $a^{p\star}_{k\cdot p}$ for $k=1,2,3$ and the interpolation in  \eqref{eq:Interp}. 
{\bf (center)} Value of $\Lambda_p$ for a ball (black), 
interpolated domains $\Omega^{p\circ}$ (blue),  and 
optimal domains $\Omega^{p\star}$ (red). 
The values for $\Omega^{p\circ}$ and $\Omega^{p\star}$ are indistinguishable.
{\bf (right)} The perimeter of $\Omega^{p\circ}$ as a function of $p$. }   
\label{fig:OptCoeffs}
\end{center}
\end{figure}

\begin{figure}[t]
\begin{center}
\includegraphics[width=.45\linewidth]{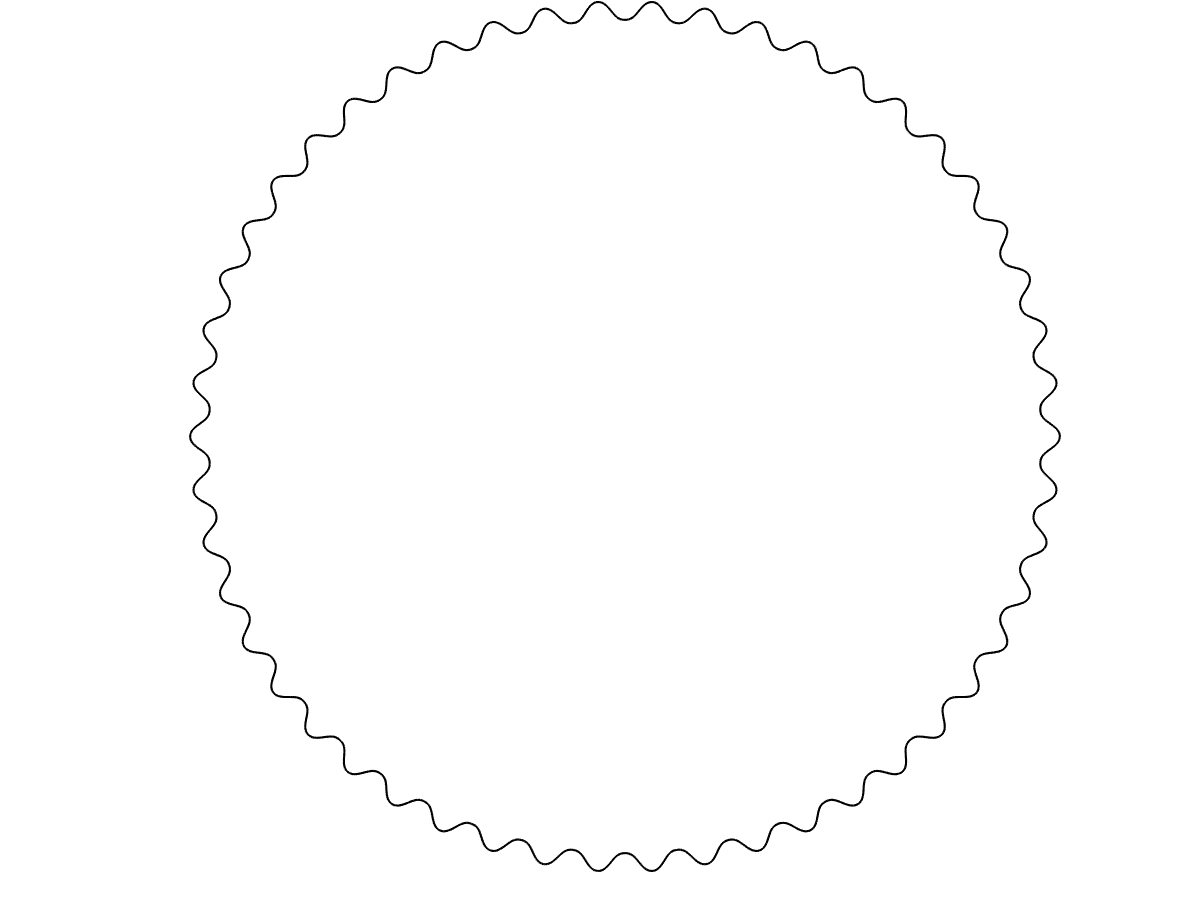} 
\includegraphics[width=.45\linewidth]{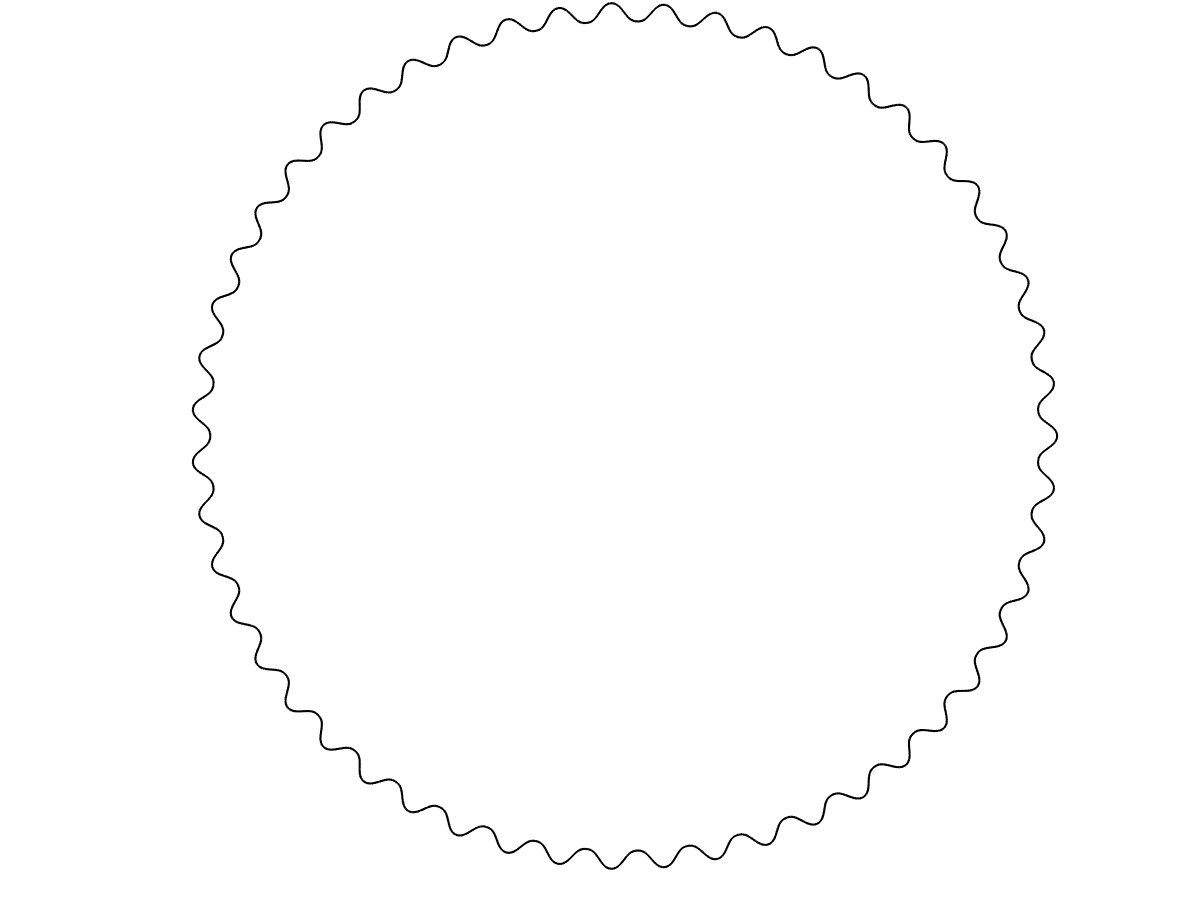} \\
\includegraphics[width=.45\linewidth]{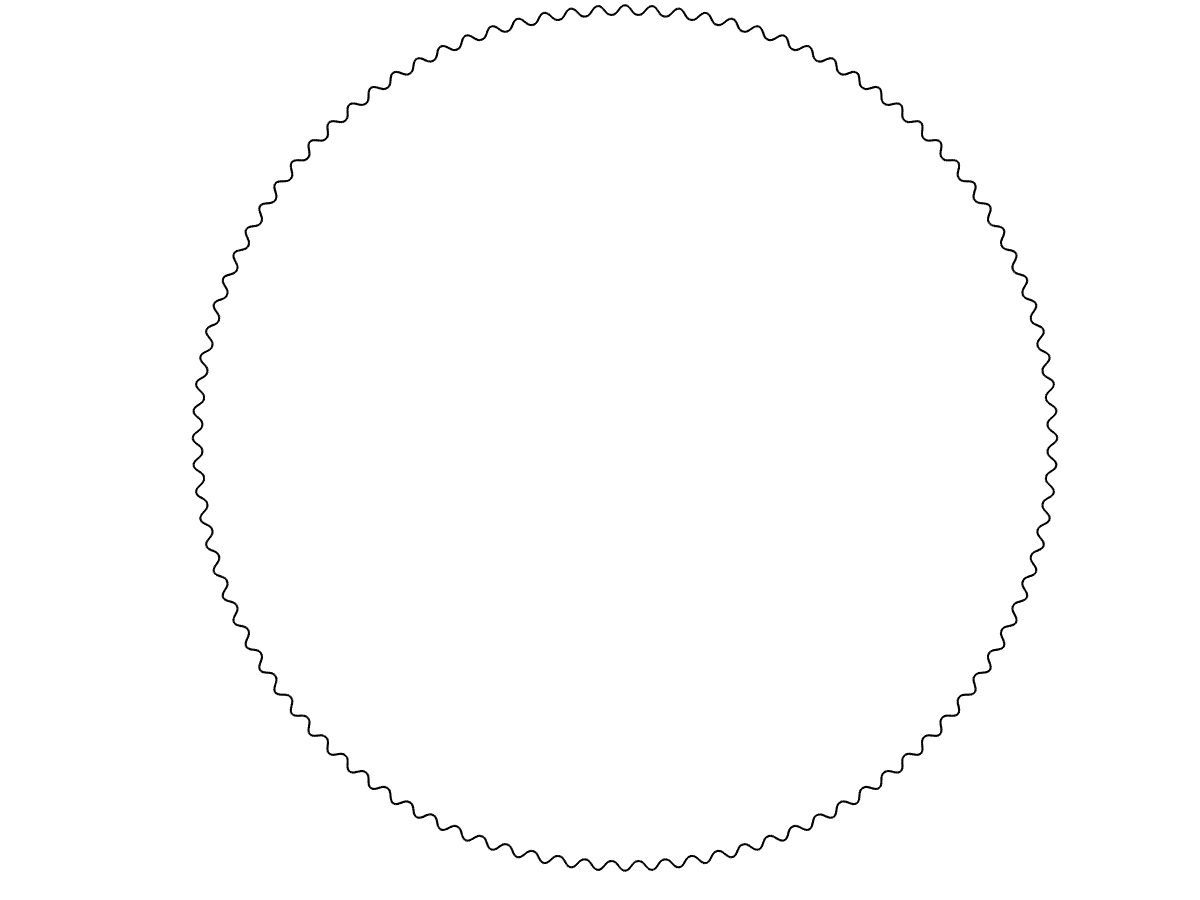}
\includegraphics[width=.45\linewidth]{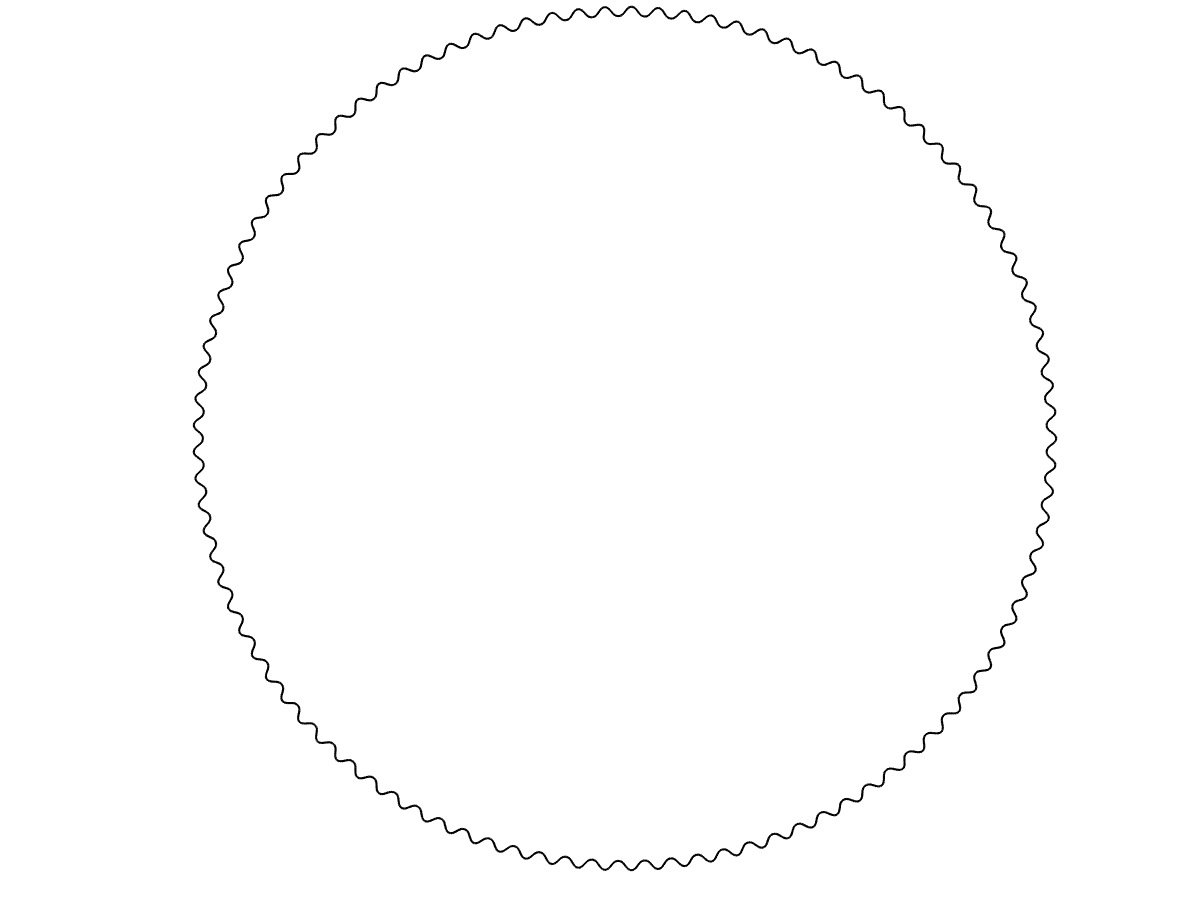}

\vspace{1cm}

{
\begin{tabular}{r |r r r r}
 j/p & 50   & 51   & 100   & 101   \\ 
 \hline 
$p-2$ & 20.30936391721 & 20.75355064378 & 40.69488836617 & 41.11849978908 \\ 
$p-1$ & 20.34992971509 & 20.75355064378 & 40.71526210762 & 41.11849978908 \\ 
$p$ & \bf 59.41361758262 & \bf 60.59374478101 & \bf 118.23330554334 & \bf 119.41159188027 \\ 
$p+1$ & \bf 59.41361758262 & \bf 60.59374478101 & \bf 118.23330554339 & \bf 119.41159188027 \\ 
$p+2$ & 59.43099705171 & \bf 60.59374478101 & 118.24200985153 & \bf 119.41159188027 \\ 
$p+3$ & 59.43099705171 & 60.62775851108 & 118.24200985153 & 119.42881860937 \\ 
$p+4$ & 59.48272776444 & 60.62775851108 & 118.26807069001 & 119.42881860937 
\end{tabular}}

\caption{{\bf (top)} $\Omega^{p\star}$ for $p=50, 51, 100, 101$.
{\bf (bottom)}  Values $\Lambda_j(\Omega^{p\star})$ for $j = p-2, \ldots, p+4$. 
}
\label{fig:highP}
\end{center}
\end{figure}

\begin{figure}[t]
\begin{center}
\includegraphics[width=1\linewidth,trim={0.2cm 5.5cm 0.2cm 5.5cm},clip]{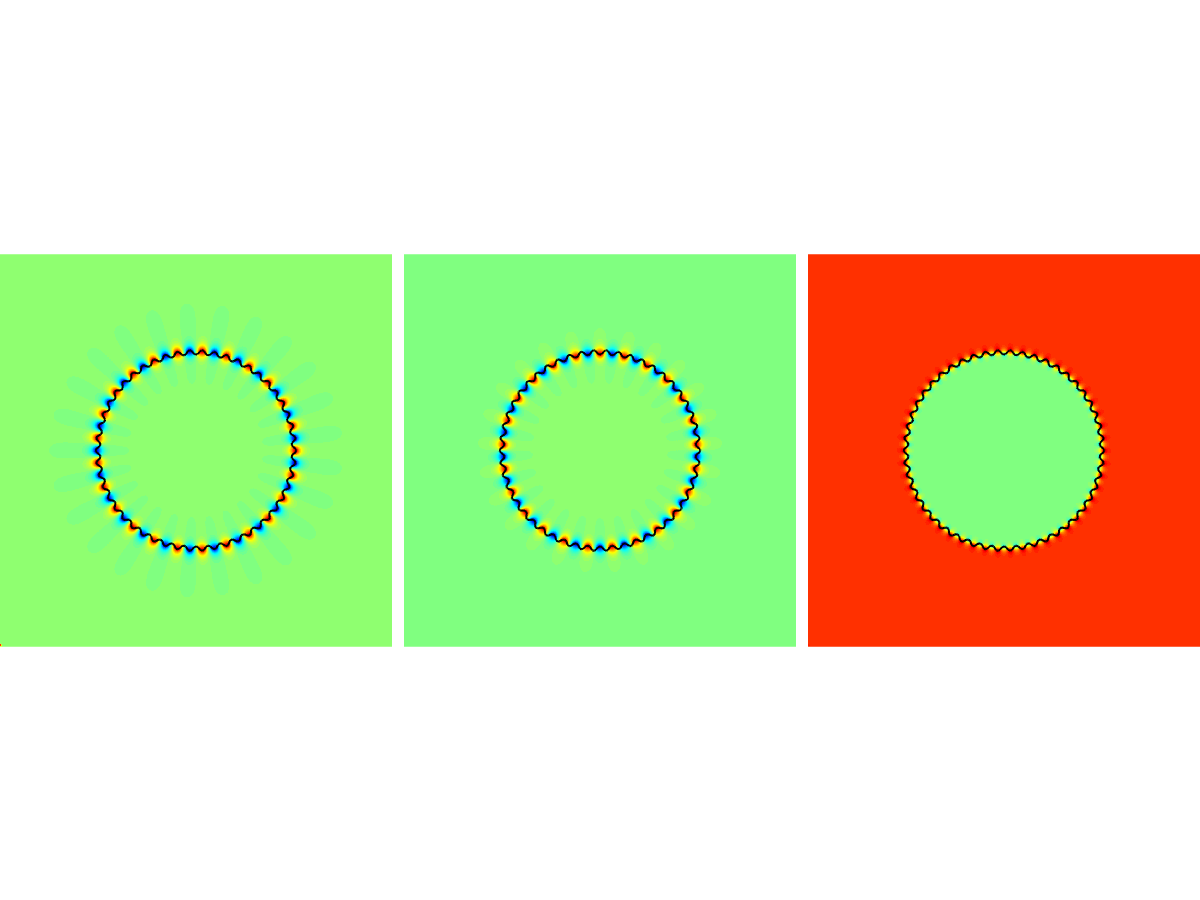} 
\vspace{.1cm}

\includegraphics[width=1\linewidth,trim={0.2cm 5.5cm 0.2cm 5.5cm},clip]{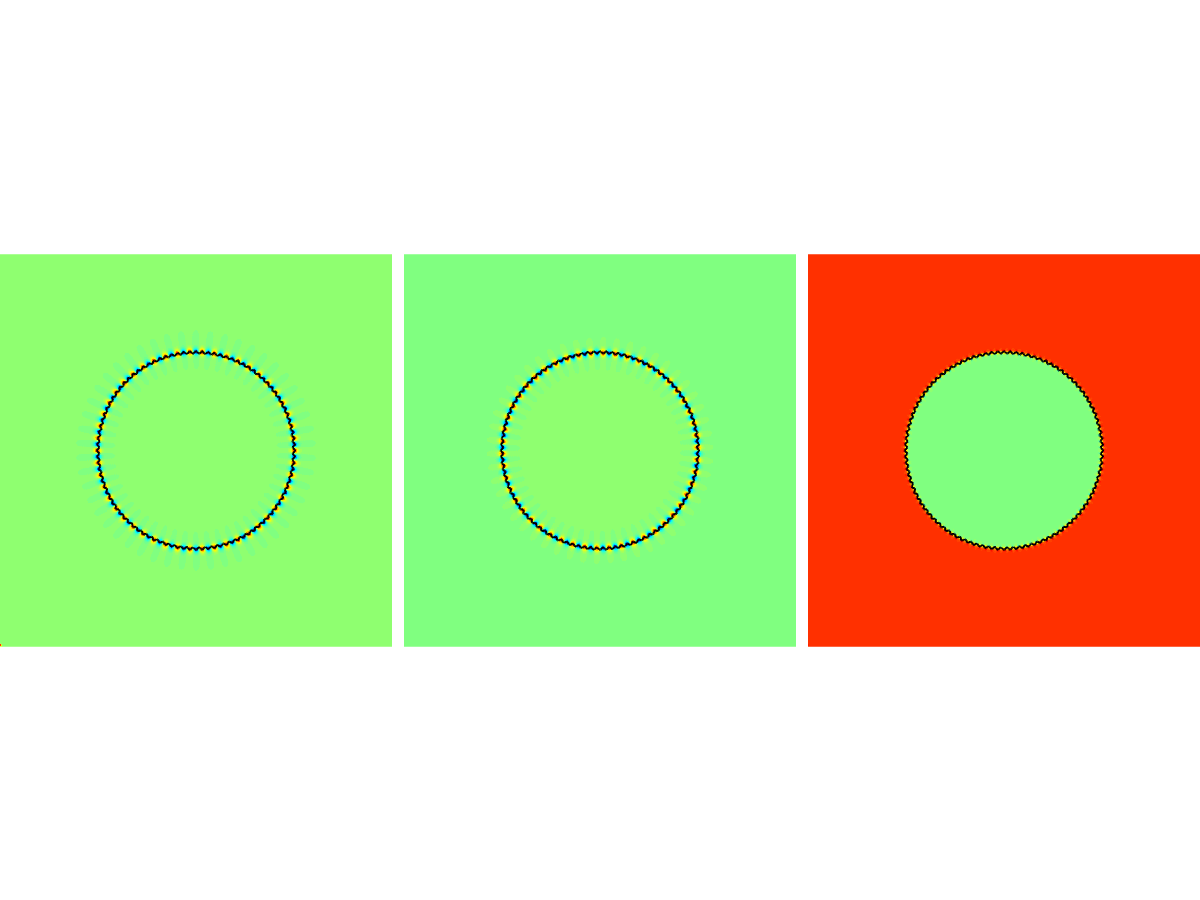} 
\caption{$p-1$, $p$, and $p+1$  Steklov eigenfunctions of $\Omega^{p\star}$   for $p=50$ (top) and $p=100$ (bottom).}
\label{fig:eigFunp50100}
\end{center}
\end{figure}

\begin{figure}[t]
\begin{center}
\includegraphics[width=1\linewidth,trim={0.2cm 5.5cm 0.2cm 5.5cm},clip]{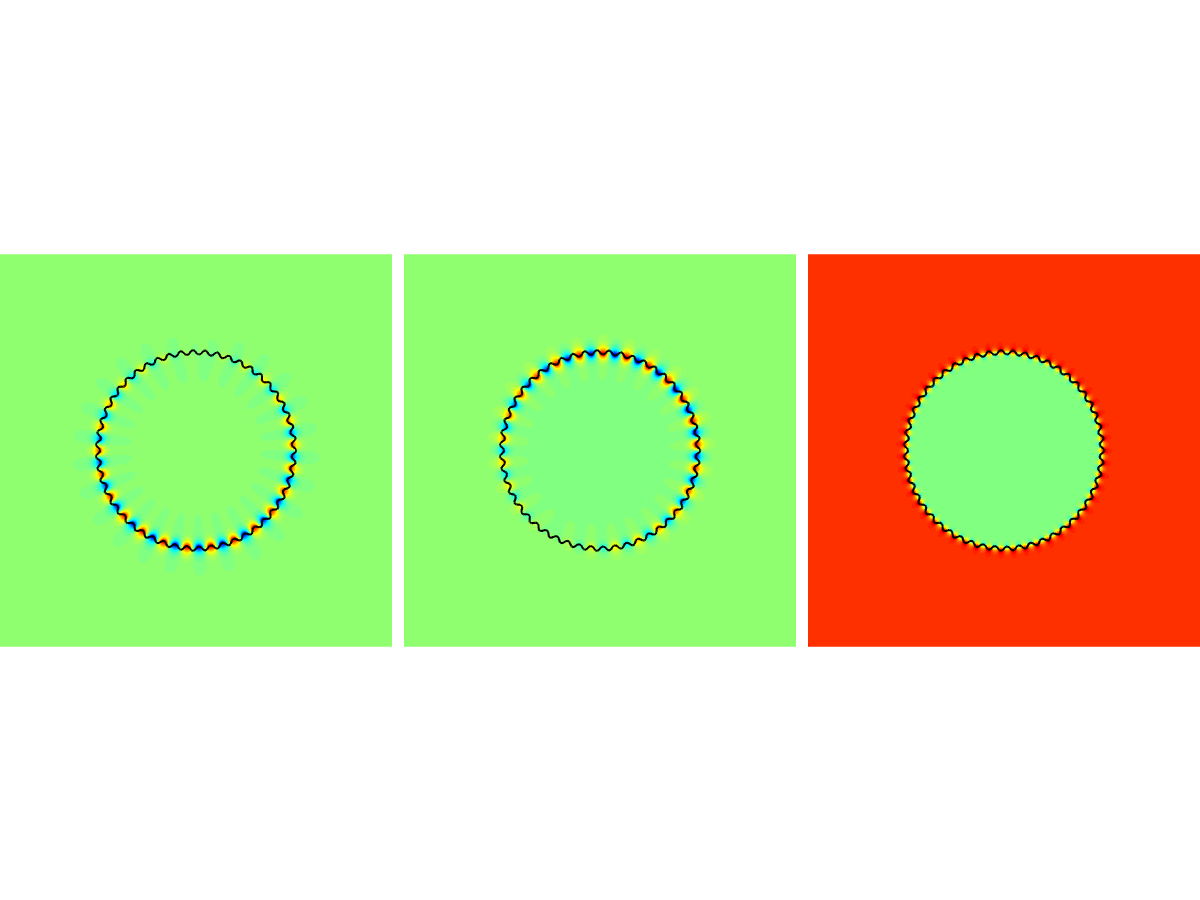} 
\vspace{.1cm}

\includegraphics[width=1\linewidth,trim={0.2cm 5.5cm 0.2cm 5.5cm},clip]{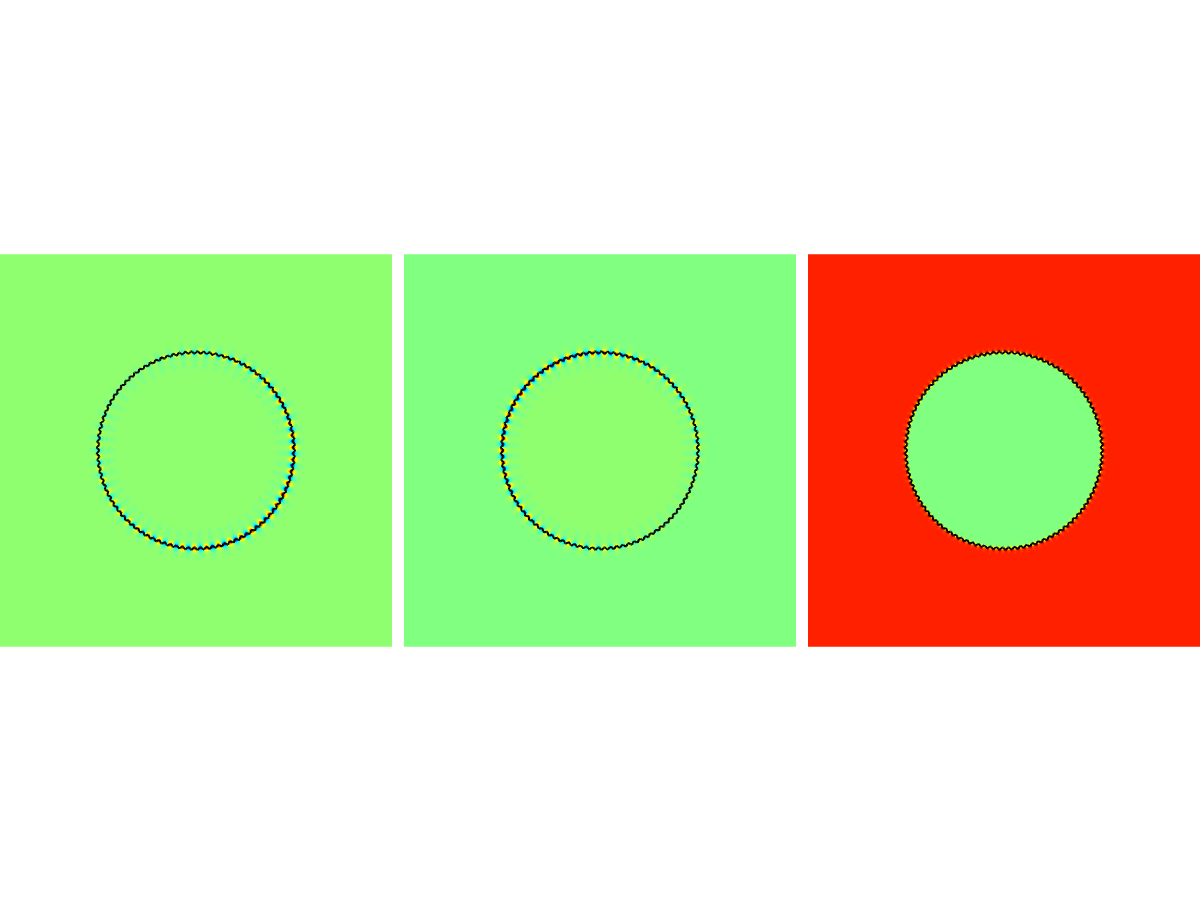} 
\caption{$p$, $p+1$, and $p+2$  Steklov eigenfunctions of $\Omega^{p\star}$   for $p=51$ (top) and $p=101$ (bottom).}
\label{fig:eigFunp51101}
\end{center}
\end{figure}

\section{Discussion} \label{sec:disc}
In this paper, we developed a computational method for extremal Steklov eigenvalue problems and applied  it to  study the problem of maximizing the $p$-th Steklov eigenvalue as a function of the domain with a volume constraint. The optimal domains,  spectrum, and eigenfunctions  are very structured, in contrast with other extremal eigenvalue problems. There are several interesting directions for this work. The first is to use conformal or quasiconformal maps to better understand the optimal domains (see \cite{Girouard2015}). It would be very interesting to extend these computational results to higher dimensions and see if the optimal domains there are also structured. The computational methods developed here could also be used to investigate other functions of the Steklov spectrum. Finally,  the optimal domains in this work could potentially find  application in sloshing problems, where it is desirable to engineer a vessel to have a  large spectral gap to avoid certain exciting frequencies \cite{troesch1965}.

\bigskip
\subsection*{Acknowledgements} We would like to thank Dorin Bucur for pointing us towards \cite{dambrine2014extremal} and Oscar Bruno and Nilima Nigam for collaboration in building the eigenvalue solver.

\clearpage
{\footnotesize 
\bibliographystyle{amsalpha}
\bibliography{ExtremeSteklovArXiv.bib} }

\end{document}